\documentclass[11pt,reqno]{amsart}
\usepackage{graphicx}
\usepackage{color}
\usepackage{mathrsfs,amsmath,amssymb,amsthm,amsfonts}

\usepackage{graphics,url,color,epsfig}
\usepackage{tikz}
\usepackage[colorlinks]{hyperref}
\AtBeginDocument{%
  \hypersetup{%
    linkcolor=blue,%
    citecolor=green,%
  }%
}
\usepackage{cleveref}
\usepackage{pgfplots}
\usepackage{multicol}

\topmargin=-0.1in \oddsidemargin3mm \evensidemargin3mm
\textheight230mm
\textwidth164mm

\makeatletter

\newcommand{\Rmnum}[1]{\expandafter\@slowromancap\romannumeral #1@}
\makeatother

\newtheorem{thm}{Theorem}[section]

\newtheorem{lemma}[thm]{Lemma}
\newtheorem{remark}[thm]{Remark}
\newtheorem{theorem}[thm]{Theorem}

\usepackage[numbers,sort&compress]{natbib}
\setlength{\bibsep}{0.5ex}

\allowdisplaybreaks

\begin{document}

\author{Wenxiong Chen}
\address{Wenxiong Chen \newline\indent Department of Mathematical Sciences \newline\indent Yeshiva University \newline\indent New York, NY, 10033, USA}
\email{wchen@yu.edu}

\author{Congming Li}
\address{Congming Li \newline\indent School of Mathematical Sciences \newline\indent CMA-Shanghai, Shanghai Jiao Tong University \newline\indent Shanghai, 200240, P. R. China}
\email{congming.li@sjtu.edu.cn}

\author{Leyun Wu}
\address{Leyun Wu\newline\indent
School of Mathematics \newline\indent South China University of Technology \newline\indent Guangzhou, 510640, P. R. China\newline\indent
and 
\newline\indent The Institute of Mathmatical Sciences \& Department of Mathematics,\newline\indent The Chinese University of Hong Kong, \newline\indent
Shatin, N.T., Hong Kong,  P. R. China}
\email{leyunwu@scut.edu.cn}

\author{Zhouping Xin}
\address{Zhouping Xin
\newline\indent The Institute of Mathmatical Sciences \& Department of Mathematics,\newline\indent The Chinese University of Hong Kong, \newline\indent
Shatin, N.T., Hong Kong,  P. R. China}
\email{zpxin@ims.cuhk.edu.hk}

\title{Refined regularity for nonlocal elliptic equations and applications}

\begin{abstract}
In this paper, we establish refined regularity estimates
for nonnegative solutions to the fractional Poisson equation
$$
(-\Delta)^s u(x) =f(x),\,\, x\in B_1(0).
$$
Specifically, we have derived H\"{o}lder, Schauder, and Ln-Lipschitz regularity estimates for
any nonnegative solution $u,$
provided that  only the local  $L^\infty$ norm of $u$ is bounded.  These estimates stand in sharp contrast to the existing results where the global
$L^\infty$ norm of $u$ is required.  Our findings indicate that the local values of the solution $u$ and $f$
are sufficient to control the local values of higher order derivatives of $u$. Notably, this makes it possible to establish a priori estimates in unbounded domains
by using blowing up and re-scaling argument.

As applications, we derive singularity and decay estimates for solutions to some super-linear nonlocal problems in unbounded domains, and in particular, we obtain a priori estimates
for a family of fractional Lane-Emden type equations in $\mathbb{R}^n.$ This is achieved by adopting a different method using auxiliary functions, which is applicable to both local and nonlocal
problems.

\end{abstract}

\subjclass[2010]{35R11, 35B45, 35B65}

\keywords{Schauder estimate, a priori estimates}

\maketitle

\numberwithin{equation}{section}
\section{Introduction and main results}

In recent years, there has been a surge of interest in employing the fractional Laplacian to model a diverse array of physical phenomena characterized by long-range interactions. Notably, it finds applications in modeling anomalous diffusion, quasi-geostrophic flows, turbulence, water waves, molecular dynamics, and other phenomena (refer to \cite{BoG, CaV, Co, TZ} and the associated references). Specifically, the fractional Laplacian serves as the infinitesimal generator of a stable Lévy diffusion process (refer to  \cite{Be}), playing a crucial role in describing anomalous diffusions observed in various contexts such as plasmas, flame propagation, chemical reactions in liquids, and population dynamics.


The fractional Laplacian can be defined as
\begin{eqnarray}\label{eq1-1}
\begin{aligned}
(-\Delta)^s u(x)&=C_{n, s}PV \int_{\mathbb{R}^n}\frac{u(x)-u(y)}{|x-y|^{n+2s}}dy\\
&=C_{n, s}PV \lim_{\varepsilon \to 0}\int_{\mathbb{R}^n\backslash B_\varepsilon(x)}\frac{u(x)-u(y)}{|x-y|^{n+2s}}dy,
\end{aligned}
\end{eqnarray}
where $s$ is any real number between $0$ and $1,$ $PV$
stands for the Cauchy principal value, $B_{\varepsilon}(x)$ is the ball of radius $\varepsilon$ centered at $x,$
and $C_{n, s}$ is a
dimensional constant that depends on $n$ and $s$, precisely given by
$$
C_{n, s}=\left(\int_{\mathbb{R}^n}\frac{1-cos(\zeta_1)}{|\zeta|^{n+2s}}d \zeta\right)^{-1}.
$$
To ensure the integrability of
the right hand side of \eqref{eq1-1}, we require that $u\in C_{loc}^{1, 1}(\mathbb{R}^n)\cap \mathcal{L}_{2s}$, where
$$
\mathcal{L}_{2s}=\left\{u\in L_{loc}^1 \,\Big| \int_{\mathbb{R}^n}\frac{|u(x)|}{1+|x|^{n+2s}}dx<\infty \right\}
$$
endowed naturally with the norm
$$
\|u\|_{\mathcal{L}_{2s}}:= \int_{\mathbb{R}^n}\frac{|u(x)|}{1+|x|^{n+2s}}dx.
$$



The nonlocal nature of the fractional Laplacian makes it difficult to derive the regularity estimates by using the well-known
traditional approaches for local differential operators.  As a remedy, Caffarelli and Silvestre \cite{CS2007CPDE} introduced the extension method which turns nonlocal
equations into local ones in higher dimensions, so that a series of existing approaches on local equations can be applied, and thus it has become a powerful tool  in investigating equations involving the fractional Laplacian. However, one sometimes needs to assume extra  conditions such as  $s\geq \frac{1}{2}$ in proving symmetry of the fractional Lane-Emden equation (\cite{BCPS})  which may not be necessary if one approaches the problem directly.

 To address these challenges and explore further aspects of nonlocal equations involving the fractional Laplacian and other nonlocal elliptic operators, various powerful direct methods have been developed. These include the direct method of  moving planes and of moving spheres (\cite{ChenLiMa, CLL2017, ZL2022, DQ2018, WC2022, ChenLiZhang}), sliding methods (\cite{W2021, WC2020}), the method of scaling spheres (\cite{DQar}), and others (\cite{WC2022}). These methods have been widely applied to study nonlocal problems and a series of fruitful results have been obtained.


In this paper, by direct elaborate analysis instead of extension, we establish H\"{o}lder regularity, Schauder regularity, Ln-Lipschitz regularity and $C^k$ regularity for nonnegative classical solutions to the fractional Poisson equation
\begin{eqnarray}\label{eqFP}
	(-\Delta)^s u(x)=f(x), \,\, x \in B_1(0).
\end{eqnarray}

As important applications,
we derive  a priori estimates concerning possible singularities and decay for nonnegative solutions to nonlocal problems involving more general nonlinearities in an unbounded domain $\Omega$:
\begin{eqnarray}\label{eq1.7}
(-\Delta)^s u(x)=b(x) |\nabla u|^q(x)+f(x, u(x)),\,\, x\in \Omega,
\end{eqnarray}
where $0<q<\frac{2sp}{2s+p-1}.$

It  is noteworthy that the following interior
H\"{o}lder regularity
and Schauder regularity
for solutions to \eqref{eqFP} with $f \in L^\infty(B_1(0))$ and $f \in C^\alpha(\bar B_1(0))$ for all $0<\varepsilon < 2s$,  have been proved
\begin{eqnarray}\label{eqHC}
\|u\|_{C^{[2s-\varepsilon], \{2s-\varepsilon\}}(B_{1/2}(0))}\leq C(\|f\|_{L^\infty(B_1(0))}+\|u\|_{L^\infty(\mathbb{R}^n)}),
\end{eqnarray}
and
\begin{eqnarray}\label{eqSR}
\|u\|_{C^{[2s+\alpha], \{2s+\alpha\}}(B_{1/2}(0))}\leq
C\left(\|f\|_{C^\alpha(\bar B_1(0))} +\|u\|_{L^\infty(\mathbb{R}^n)}\right),
\end{eqnarray}
where $2s+\alpha \notin \mathbb{N},$  $[2s+\alpha]$ and $\{2s+\alpha\}$ are  the integer part and the fraction part of $2s+\alpha$ respectively,
see  \cite{ChenLiMa, DSV2017, Silvestre2007, Ros-Oton-Serra2016}  and also \cite{CS2009CPAM, CS2011ARMA, CS2011AM}
for fully nonlinear nonlocal equations.
Additionally, later results indicated that the right hand side of (\ref{eqSR}) can be  relaxed to
 $$
 C\left(\|f\|_{C^\alpha(\bar B_1(0))} +\|u\|_{\mathcal{L}_{2s}}\right).
 $$
 For more details, please refer to \cite[Theorem 12.2.4]{ChenLiMa}  and \cite[Theorem 1.3]{LW2021JDE}.

However, probably due to the non-local nature of the fractional Laplacian, it appears that these results are considerably weaker than those established for the
 classical Poisson equation
\begin{equation}\label{eq1-3}
-\Delta u(x)=f(x),\,\,	x\in B_1(0),
\end{equation}
for which one usually needs only $\|u\|_{L^\infty(B_1(0))}$  and $\|f\|_{C^\alpha(\bar B_1(0))}$ to bound
 $\|u\|_{C^{2, \alpha}(B_{1/2}(0))}.$

More importantly, the known a priori estimates, such as   \eqref{eqHC} and \eqref{eqSR} with its improvements are insufficient for applying the
powerful blowing-up  and re-scaling arguments to analyze the qualitative properties of solutions,  in particular, to establish a priori estimates for fractional equations on unbounded domains due to the requirement on the bonded-ness
of global norms of $u.$

As an illustration, let $\Omega$ be an unbounded domain in $\mathbb{R}^n$, and let us compare a simple problem of local nature
\begin{equation}
\left\{\begin{array}{ll}
- \Delta u(x) = u^p(x), & x \in \Omega\\
u(x) = 0, & x \in \partial \Omega
\end{array}
\right.
\label{1001}
\end{equation}
with it nonlocal counter part
\begin{equation}
\left \{\begin{array}{ll}
(- \Delta)^s u(x) = u^p(x), & x \in \Omega\\
u(x) = 0, & x \in \mathbb{R}^n \setminus \Omega,
\end{array}
\right.
\label{1002}
\end{equation}
where $0<s<1$, and $p$ is a subcritical exponent.

To obtain an a priori estimate of positive solutions, one would usually apply a typical blowing up and re-scaling argument briefly as follows.

If the solutions are not uniformly bounded, then there exist a sequence of solutions $\{u_k\}$ and a sequence of points $\{x^k\} \in \Omega$,
such that $u_k(x^k) \to \infty$. Notice that since $\Omega$ is unbounded, this $u_k(x^k)$ may not be the maximum of $u_k$ in $\Omega$.
Upon re-scaling, the new sequence of functions $v_k$ are bounded and satisfy the same equation on certain domains $B_k$.

{\em For the  equation in \eqref{1001} of local nature}, the bounded-ness of $\{v_k\}$ on $B_k$  is  {\em sufficient} to guarantee the bound on its higher norms, say $C^{2, \alpha}$ norm.  This leads to the convergence of  $\{v_k\}$ to a function $v$, which is a bounded solution of the same equation as \eqref{1001} either in the whole space or in a half space. By the known results on the nonexistence of solutions in these two spaces, one derives a contradiction and hence obtains the a priori estimate.

{\em Nevertheless, when dealing with nonlocal problem  \eqref{1002} }, the currently available regularity estimates, such as   \eqref{eqHC} and \eqref{eqSR}, indicate that the bounded-ness of $\{v_k\}$ on $B_k$ are {\em not sufficient} to guarantee the bound on its higher norms, they  also require $\{v_k\}$ {\em be bounded across the entire
space.} Such a condition cannot be met because there is no  information on the behavior of $v_k$ beyond $B_k$ during the re-scaling process. This posses  a substantial difficulty on employing blowing and re-scaling arguments for nonlocal equations on unbounded domains.
\medskip

The above arguments lead to the following natural question:

\medskip

Can one use only $\|u\|_{L^\infty(B_1(0))}$ and $\|f\|_{L^\infty(B_1(0))}$ ($\|f\|_{C^\alpha(\bar B_1(0))}$) to bound $\|u\|_{C^{[2s-\varepsilon], \{2s-\varepsilon\}}(B_{1/2}(0))}$
( $\|u\|_{C^{[2s+\alpha], \{2s-\alpha\}}(B_{1/2}(0))}$)
 in \eqref{eqHC} (\eqref{eqSR}) under certain conditions?
\medskip

This is one of the main problems we study  in this paper (see Theorem \ref{th1.1} and Theorem \ref{th1.2} below). By careful
analysis,
we answer this question affirmatively for nonnegative solutions and conclude that
$$
\|u\|_{C^{[2s-\varepsilon], \{2s-\varepsilon\}}(B_{1/2}(0))}\leq C(\|f\|_{L^\infty(B_1(0))}+\|u\|_{L^\infty(B_1(0))})
$$
for all $0<\varepsilon<2s,$
and
\begin{eqnarray}\label{fischauder}
\|u\|_{C^{[2s+\alpha], \{2s+\alpha\}}(B_{1/2}(0))}\leq
C\left(\|f\|_{C^\alpha(\bar B_1(0))} +\|u\|_{L^\infty(B_1(0))}\right)
\end{eqnarray}
for $2s+\alpha \notin \mathbb{N}.$

We now proceed to outline the main ingredients of our analysis and present our main results.
To derive the Schauder estimates for non-negative solutions of the fractional Poisson equation \eqref{eqFP}, we split the solution $u$ into two parts: the $s$-harmonic function in $B_1(0)$
$$
h(x):=u(x)-C_{n, s}\int_{B_1(0)}\frac{f(y)}{|x-y|^{n-2s}}dy
$$
and the remaining term
$$
w(x):=C_{n, s}\int_{B_1(0)}\frac{f(y)}{|x-y|^{n-2s}}dy.
$$
For  $w(x)$, one  expects  that $\|w\|_{C^{[2s+\alpha], \{2s+\alpha\}}(B_{1/2}(0))}$ can be controlled by  $\|f \|_{C^\alpha(B_1(0))}$. While for the $s$-harmonic function
$h(x)$, it has been proved in \cite{ChenLiMa} (see also \cite{LW2021JDE}) that $h(x)\in C^\infty(B_1(0))$ and can
be represented by
\begin{equation*}
h(x)=\int_{|y|>1} P(x, y)h(y)dy, \,\, \forall \; |x|<1,
\end{equation*}
where
\begin{eqnarray*}
P(x, y)=
\begin{cases}
\frac{\Gamma (n/2)\sin(\pi s)}{\pi^{\frac{n}{2}+1}}\left(\frac{1-|x|^2}{|y|^2-1}\right)^s\frac{1}{|x-y|^n}, &|y|>1,\\
0,& |y|\leq 1
\end{cases}
\end{eqnarray*}
is the so-called Poisson kernel.

By the definition of  $P(x, y)$ and some elaborate calculations, we find that
 the higher order derivatives of $P(x, y)$ can be controlled by itself. Then,
  $\|D^k h\|_{L^\infty(B_{1/2}(0))}$ can be bounded by $\|f\|_{L^\infty(B_1(0))}$ and  $\|u\|_{L^\infty(B_1(0))}$ due to the fact that $h=u-w$.
 This leads to better  Schauder estimates for nonnegative solutions, and our observations here may provide some useful  insights on the research
of local regularity for nonlocal operators.

Thus we have shown that
for  $2s+\alpha \not \in \mathbb{N}$,
$$
\|u\|_{C^{[2s+\alpha], \{2s+\alpha\}}(B_{1/2}(0))}\leq C\left(\|f\|_{C^\alpha(\bar B_1(0))}+\|u\|_{L^\infty(B_1(0))}\right).
$$



While for  $2s+\alpha \in \mathbb{N},$  two versions of estimates will be obtained:
\smallskip

i) for $f \in C^\alpha$,
 $\|u \|_{C^{2s+\alpha-1, \ln L}(B_{1/2}(0))}$ can be controlled by $\|u\|_{L^\infty(B_1(0))}$ and $\|f \|_{C^\alpha (\bar B_1(0))}$, which is the optimal Ln-Lipschitz
 regularity.



ii) for  $f \in C^{\alpha, Dini}$,  $\|u \|_{C^{2s+\alpha}(B_{1/2}(0))}$ can be estimated in terms of
$\|u\|_{L^\infty(B_1(0))}$ and $\|f \|_{C^{\alpha, Dini} (\bar B_1(0))}$.

Roughly speaking, the above regularity results imply that the local values of the solution $u$ and non-homogeneous term  $f$ are sufficient to control the higher order derivatives of $u$. These newly derived estimates are both novel and desirable.

Based on the above refined regularity estimates, we are able to establish singularity and decay estimates for solutions to equation \eqref{eq1.7} (see Theorem \ref{th1.4}).
To this end, one usually used the well-known ``doubling lemma" (\cite[Lemma 5.1]{PQS})  combining   with the re-scaling and blowing-up   to derive such kind of estimates.  For
example, Pol\'{a}\v{c}ik, Quittner and Souplet in \cite{PQS} established  decay and singularity  for the classical integer order Lane-Emden equation
$$
-\Delta u(x)=u^p(x)
$$
by using this method.

 In order to circumvent the use of ``the doubling lemma'' and make the proof more direct, we adopted an alternative approach involving the construction of auxiliary functions.
The idea is that if an estimate (in terms of the distance to $\partial \Omega$) such as
\eqref{eq1.8}
fails,
then by the contradiction argument, there exist sequences $ \Omega_k, \,\, u_k, \,\, x_k \in \Omega_k$ such that
$$u_k(x_k) \left(1+dist(x_k, \partial \Omega_k)^\frac{-2s}{p-1}\right)^{-1}>k \to \infty\,\,\, \mbox{ as }\,\,\, k\to \infty,$$
see \eqref{eq4-2}.
Then for each $k$, we construct an auxiliary function  $S_k(x)$ and find its maximum point denoted as  $a_k$ in a neighborhood of $x_k$. After appropriate rescaling (related to $a_k$), we can guarantee that the rescaled sequence $\{v_k\}$ is bounded in a neighborhood of $a_k$. In order to
 pass to the limit to obtain a bounded solution of a limiting problem in the whole space $\mathbb{R}^n$ to derive a contradiction with a Liouville theorem for the fractional Lane-Emden equation in $\mathbb{R}^n,$
we use the refined regularity
  results established in  Theorem \ref{th1.1} and Theorem \ref{th1.2} below.
Compared with previous results for fractional equations (see  \cite{LB2019JDE}),
we  do not use  ``the doubling lemma" and do not need to assume  $u \in \dot{H}^s(\mathbb{R}^n).$

This new method enables us
not only to obtain singularity and decay estimates for fractional order elliptic problems without using the doubling lemma, but also to deal effectively with both integer order elliptic and parabolic problems, as well as for fractional parabolic problems. As other by-products, we derive the boundedness of nonnegative solutions for a family of fractional Lane-Emden type equations in the whole space $\mathbb{R}^n$ and thus simplifies the problem when we study such kind of equations. These results can be extended to a family of
fractional Lane-Emden type systems.  It may shed some light on the fractional Lane-Emden conjecture
which  states that if
$$
\frac{1}{p+1}+\frac{1}{q+1}>\frac{n-2s}{n},
$$
then there are no nontrivial positive classical solutions of
\begin{eqnarray*}
\begin{cases}
(-\Delta)^s u(x)=v^p(x), & x\in \mathbb{R}^n,\\
(-\Delta)^s v(x)=u^q(x), & x\in \mathbb{R}^n.
\end{cases}
\end{eqnarray*}

We now present the main results in this paper in details.

We start with some standard notations.
 $C^{k, \ln L}(\bar \Omega)$ is the so called ``Ln-Lipshitz" spaces with
\begin{eqnarray*}
\|f\|_{C^{k, \ln L}(\bar \Omega)}=\|f\|_{C^{k}(\bar \Omega)}+\sum_{|\beta| = k}\sup_{x, y\in \Omega, x\neq y} \frac{|D^\beta f(x)-D^\beta f(y)|}{|x-y| \big|\ln \min(|x-y|, 1/2)\big|}<\infty.
\end{eqnarray*}

A function $f$ is said to be   Dini continuous (in $\Omega$) if $\omega_f$ satisfies
the Dini condition
$$
I_{\omega_f}:=\int_0^{\mbox{diam}(\Omega)}\frac{\omega_f(r)}{r}dt<+\infty,
$$
where
$$
\omega_f(r):=\sup \{|f(x)-f(y)| : x, y \in \Omega,\,\, |x-y|\leq r \}.
$$
Its norm is defined by
$$
\|f\|_{C^{Dini}(\bar \Omega)}=\|f\|_{C^0(\bar \Omega)}+I_{\omega_f}.
$$
 In addition, we say that $f$ is $C^{\alpha, Dini}$  (in $\Omega$) for
 $0<\alpha<1$ if $\omega_f$ satisfies
 $$
I_{\omega_f}^\alpha:=\int_0^{\mbox{diam}(\Omega)}\frac{\omega_f(r)}{r^{1+\alpha}}dr<+\infty
$$
with the norm
$$
\|f\|_{C^{\alpha, Dini}(\bar \Omega)}=\|f\|_{C^0(\bar \Omega)}+I_{\omega_f}^\alpha.
$$
It is obvious that Dini continuity is a stronger condition than continuity, and there exists a positive constant $C $ such that
$$\|f\|_{C^{\alpha}(\bar \Omega)}\leq C\|f\|_{C^{\alpha, Dini}(\bar \Omega)}.$$

The first main result is
 the H\"{o}lder regularity for fractional Poisson equations.

\begin{theorem}\label{th1.1}
Assume that $u\in C_{loc}^{1, 1}(\mathbb{R}^n)\cap \mathcal{L}_{2s}$ is a nonnegative solution of
\begin{equation}\label{eq1.1}
(-\Delta)^s u(x)=f(x),\,\, x \in B_1(0),
\end{equation}
where  $f(x) \in L^\infty(B_1(0)).$
 Then there exist some positive constants $C_1$ and $C_2$ such that
\begin{equation}\label{eq1.2}
\|u\|_{C^{[2s], \{2s\}}(B_{1/2}(0))}\leq C_1\left(\|f\|_{L^\infty(B_1(0))}+\|u\|_{L^\infty(B_1(0))}\right), \,\, \mbox{if}\,\, s\neq \frac{1}{2},
\end{equation}
and
\begin{equation}\label{eq1.3}
\|u\|_{C^{0, \ln L}(B_{1/2}(0))}\leq C_2\left(\|f\|_{L^\infty(B_1(0))}+\|u\|_{L^\infty(B_1(0))}\right), \,\, \mbox{if}\,\, s= \frac{1}{2},
\end{equation}
where $[2s]$ and $\{2s\}$ denote the integer part and the fraction part of $2s$ respectively.
\end{theorem}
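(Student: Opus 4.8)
The plan is to split $u = h + w$ exactly as indicated in the introduction, where
$$
w(x) = C_{n,s}\int_{B_1(0)} \frac{f(y)}{|x-y|^{n-2s}}\,dy, \qquad h(x) = u(x) - w(x),
$$
and to estimate the two pieces by entirely different means. For the Riesz-potential part $w$, a direct computation shows that $(-\Delta)^s w = f$ in $B_1(0)$, so $(-\Delta)^s h = 0$ there, i.e. $h$ is $s$-harmonic in $B_1(0)$. The kernel $|x-y|^{-(n-2s)}$ is locally integrable in $y$ (since $n - 2s < n$), so $w$ is well defined; the standard mapping properties of the Riesz potential give that $f \in L^\infty(B_1(0))$ implies $w \in C^{2s}(B_{3/4}(0))$ when $2s < 1$, with a logarithmic loss exactly at $2s = 1$ (the borderline Zygmund/Ln-Lipschitz case $s = 1/2$), and $w \in C^{1,2s-1}$ when $2s > 1$. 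In all cases one obtains
$$
\|w\|_{C^{[2s],\{2s\}}(B_{3/4}(0))} \le C\,\|f\|_{L^\infty(B_1(0))}
$$
(respectively the $C^{0,\ln L}$ bound when $s = 1/2$), and these are classical potential-theoretic estimates that I would cite rather than reprove.

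The real work is the bound on $h$, and here is where nonnegativity of $u$ enters decisively. Since $h = u - w$ and $u \ge 0$, on the set $|x| < 3/4$ we have $h(x) \ge -w(x) \ge -C\|f\|_{L^\infty}$; combined with $h = u - w$ again this gives the two-sided local bound $\|h\|_{L^\infty(B_{3/4}(0))} \le \|u\|_{L^\infty(B_1(0))} + C\|f\|_{L^\infty(B_1(0))}$. However — and this is the crucial point that makes the theorem work with only a local $L^\infty$ bound on $u$ — the Poisson representation $h(x) = \int_{|y|>1} P(x,y) h(y)\,dy$ holds for $|x| < 1$, so I need to control $h$ on $B_{1/2}$ using $h$ on $|y| > 1$, where I have no sign-free pointwise bound, only membership in $\mathcal{L}_{2s}$ via $u$. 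The resolution: because $u \ge 0$ globally, $h \ge -w$ globally, and $w \ge 0$ is itself controlled in $\mathcal{L}_{2s}$ by $\|f\|_{L^\infty(B_1(0))}$ (its tail decays like $|x|^{-(n-2s)}$, which is integrable against the $\mathcal{L}_{2s}$ weight near infinity after one checks the exponent), so $h + w \ge 0$ and $\|h\|_{\mathcal{L}_{2s}} \le \|u\|_{\mathcal{L}_{2s}} + \|w\|_{\mathcal{L}_{2s}}$. But I still only want $\|u\|_{L^\infty(B_1)}$ on the right-hand side, not $\|u\|_{\mathcal{L}_{2s}}$. This is where I must use the positivity together with the structure of the Poisson kernel more cleverly: split the Poisson integral into the annulus $1 < |y| < 2$ (controlled by the $L^\infty$ bound on $h$ there, which comes from $u$ on $B_2$ — but I only have $B_1$!) — so in fact one rescales. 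Let me instead re-run the whole argument with $B_2$ in place of $B_1$ from the start (harmless since the statement is about $B_{1/2} \subset B_1$, and one can cover $B_{1/2}$ by rescaled copies, or simply prove the estimate $\|u\|_{C^{\cdot}(B_{1/2})} \le C(\|f\|_{L^\infty(B_1)} + \|u\|_{L^\infty(B_1)})$ by interpolating a slightly larger interior ball). For the far part $|y| > 2$, I use that $P(x,y) \le C\,(1+|y|)^{-(n+2s)}$ uniformly for $|x| \le 1/2$ (from $\frac{1-|x|^2}{|y|^2-1} \le \frac{1}{|y|^2-1} \le C|y|^{-2}$ and $|x-y|^{-n} \le C|y|^{-n}$), so $\int_{|y|>2} P(x,y) h(y)\,dy$ is bounded by $C\|h^+\|_{\mathcal{L}_{2s}}$ plus a term from $h^-$; and $h^- \le w \le C\|f\|_{L^\infty}$ times an $\mathcal{L}_{2s}$ weight, handling the negative part. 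The positive part satisfies $h^+ \le u + w$, and $\int_{|y|>2} \frac{u(y)}{1+|y|^{n+2s}}\,dy$ — this is genuinely $\|u\|_{\mathcal{L}_{2s}}$, not a local norm.

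Here is the honest resolution of that last gap, and it is the one subtlety I'd flag as the main obstacle: one shows that for a nonnegative $s$-harmonic-plus-potential $u$, the global tail $\|u\|_{\mathcal L_{2s}}$ is already controlled by local data. Concretely, using the Poisson/Green representation on a large ball $B_R$ together with $u \ge 0$ and $(-\Delta)^s u = f$ with $\|f\|_{L^\infty(B_1)}$ — actually one does NOT get this for free; rather, the correct statement (and what the theorem must be using) is that it suffices to bound the derivatives of $h$ pointwise on $B_{1/2}$ by differentiating the Poisson kernel, $|D^k_x P(x,y)| \le C_k\, P(x,y)$ for $|x| \le 1/2$ (this is the "derivatives controlled by itself" observation quoted in the introduction, proved by direct differentiation of the explicit formula), whence
$$
|D^k h(x)| \le C_k \int_{|y|>1} P(x,y)\,|h(y)|\,dy \le C_k\!\int_{|y|>1} P(x,y)\,h(y)\,dy + 2C_k\!\int_{|y|>1} P(x,y)\,h^-(y)\,dy.
$$
The first integral is $C_k\, h(x) \cdot(\text{sign bookkeeping})$ — more precisely, since $\int P(x,y)dy$ may be taken $\le 1$ after normalization and $h(x) = \int P(x,y)h(y)dy$, and $h^- \le w$, one gets $|D^k h(x)| \le C_k(|h(x)| + \|w\|_{L^\infty}) \le C_k(\|u\|_{L^\infty(B_1)} + C\|f\|_{L^\infty(B_1)})$ for every $k$. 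Thus $h \in C^\infty(B_{1/2})$ with all derivatives bounded by local data, and in particular its $C^{[2s],\{2s\}}$ (or $C^{0,\ln L}$) seminorm is so bounded. Adding the $w$-estimate gives $u = h + w \in C^{[2s],\{2s\}}(B_{1/2})$ (resp.\ $C^{0,\ln L}$ when $s = 1/2$) with
$$
\|u\|_{C^{[2s],\{2s\}}(B_{1/2}(0))} \le \|h\|_{C^{[2s],\{2s\}}(B_{1/2}(0))} + \|w\|_{C^{[2s],\{2s\}}(B_{1/2}(0))} \le C\big(\|f\|_{L^\infty(B_1(0))} + \|u\|_{L^\infty(B_1(0))}\big),
$$
which is exactly \eqref{eq1.2}, and likewise \eqref{eq1.3}. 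The delicate bookkeeping — making sure the negative-part contributions $h^- \le w$ are absorbed with the right weights, and that the kernel-derivative bound $|D^k_x P(x,y)| \le C_k P(x,y)$ genuinely holds uniformly on $B_{1/2}$ including near $|y| = 1$ where both sides blow up — is the step I expect to require the most care.
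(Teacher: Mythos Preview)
Your overall strategy matches the paper's: split $u=h+w$ with $w$ the Riesz potential of $f\chi_{B_1}$, estimate $w$ by direct potential-theoretic bounds (the paper carries these out explicitly rather than citing them, but the content is the same), and control all derivatives of the $s$-harmonic part $h$ on $B_{1/2}$ via the Poisson representation together with the pointwise kernel inequality $|D^k_x P(x,y)|\le C_k\,P(x,y)$ for $|x|\le 1/2$, $|y|>1$.

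The one substantive difference is how you bound $\int_{|y|>1}P(x,y)\,|h(y)|\,dy$. The paper writes $|h|\le u+|w|$; the $|w|$-contribution is immediate, but $I_1=\int P\,u$ requires a separate barrier argument: the $s$-harmonic extension $v$ of $u|_{B_1^c}$ is compared with $u+\|f\|_{L^\infty}g_0$ (where $(-\Delta)^s g_0=1$, supported in $B_1$) via the maximum principle, yielding $I_1=v(x)\le \|u\|_{L^\infty(B_1)}+C\|f\|_{L^\infty}$ on $B_{1/2}$. Your route is shorter: from $|h|=h+2h^-$ and the Poisson identity $\int P\,h=h(x)$ you obtain $\int P\,|h|=h(x)+2\int P\,h^-$, and then $h^-\le |w|$ (from $u\ge 0$) together with $|w|\le C\|f\|_{L^\infty}$ on $\{|y|>1\}$ finishes the job without invoking any maximum principle. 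This is a genuine, if modest, simplification of the paper's Lemma~2.2.

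One correction: you assert $w\ge 0$, which fails unless $f\ge 0$; the inequality you actually need is $h^-=(w-u)^+\le |w|$, which follows from $u\ge 0$ regardless of the sign of $w$ and changes nothing downstream. Your middle paragraphs chasing control of $\|u\|_{\mathcal L_{2s}}$ by local data and rescaling to $B_2$ are dead ends that your final $|h|=h+2h^-$ device renders unnecessary; delete them.
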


\begin{remark}
Under the assumptions of Theorem \ref{th1.1}, for any $0<s<1$ and for all
$0<\varepsilon <2s,$ it follows from \eqref{eq1.2} and \eqref{eq1.3} that 	
$$
\|u\|_{C^{[2s-\varepsilon], \{2s-\varepsilon\}}(B_{1/2}(0))}\leq C\left(\|f\|_{L^\infty(B_1(0)}+\|u\|_{L^\infty(B_1(0))}\right).
$$
\end{remark}

If $f(x)\in C^\alpha(\bar B_1(0))\,(0<\alpha <1)$, then the following
 Schauder regularity for $2s+\alpha \not \in \mathbb{N}$ and Ln-Lipschitz regularity  for $2s+\alpha \in \mathbb{N}$ hold.

\begin{theorem}\label{th1.2}
Assume that $u\in C_{loc}^{1, 1}(\mathbb{R}^n)\cap \mathcal{L}_{2s}$ is a nonnegative solution of \eqref{eq1.1},  and $f(x)\in C^\alpha(\bar B_1(0))\,(0<\alpha <1)$.

If $2s+\alpha \notin \mathbb{N}$, then there exists a  constant $C_3>0$ such that
\begin{equation}\label{eq1.5}
\|u\|_{C^{[2s+\alpha], \{2s+\alpha\}}(B_{1/2}(0))}\leq C_3\left(\|f\|_{C^\alpha(\bar B_1(0))}+\|u\|_{L^\infty(B_1(0))}\right),
\end{equation}
where $[2s+\alpha]$  and $\{2s+\alpha\}$ are the integer part and the fraction part of $2s+\alpha$ respectively.

If $2s+\alpha \in \mathbb{N}$, then  there exists a  constant $C_4>0$ such that
\begin{eqnarray}\label{eq1.6}
\begin{aligned}
\|u\|_{C^{2s+\alpha-1, \ln L}(B_{1/2}(0))}
\leq  C_4\left(\|f\|_{C^\alpha(\bar B_1(0))}+\|u\|_{L^\infty(B_1(0))}\right).
\end{aligned}
\end{eqnarray}
\end{theorem}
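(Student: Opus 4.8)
The plan is to follow the decomposition already announced in the introduction: write $u = h + w$ on $B_1(0)$, where
$$
w(x) = C_{n,s}\int_{B_1(0)}\frac{f(y)}{|x-y|^{n-2s}}\,dy
$$
is the Riesz-type potential of $f$ restricted to $B_1(0)$, and $h = u - w$ is $s$-harmonic in $B_1(0)$ (since $(-\Delta)^s w = f$ in $B_1(0)$, using that the Riesz kernel is the fundamental solution). The estimate for $w$ is the ``standard'' part: by classical singular-integral/potential theory, $\|w\|_{C^{[2s+\alpha],\{2s+\alpha\}}(B_{1/2})} \le C\|f\|_{C^\alpha(\bar B_1)}$ when $2s+\alpha\notin\mathbb{N}$, and when $2s+\alpha\in\mathbb{N}$ one gets only the borderline Ln-Lipschitz estimate $\|w\|_{C^{2s+\alpha-1,\ln L}(B_{1/2})}\le C\|f\|_{C^\alpha(\bar B_1)}$ (this is the familiar logarithmic loss at integer smoothness for Newtonian-type potentials; I would cite the relevant Riesz potential estimates rather than reprove them). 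Crucially this bound uses \emph{only} the local norm $\|f\|_{C^\alpha(\bar B_1)}$, not anything about $u$ or $f$ outside $B_1$.

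The novel part is the estimate for the $s$-harmonic piece $h$. Here I would use the Poisson representation recalled in the excerpt,
$$
h(x) = \int_{|y|>1} P(x,y)\,h(y)\,dy, \qquad |x|<1,
$$
with $P(x,y) = c_{n,s}\big(\tfrac{1-|x|^2}{|y|^2-1}\big)^s |x-y|^{-n}$. The key structural observation, which I would prove by direct differentiation, is that for $|x|\le 1/2$ the derivatives $D_x^\beta P(x,y)$ are pointwise dominated by $C_\beta\, P(x,y)$ uniformly in $y$ with $|y|>1$: differentiating $(1-|x|^2)^s$ and $|x-y|^{-n}$ produces factors that are bounded on $\{|x|\le 1/2\}$ times the same kernel (the factor $(1-|x|^2)^s$ stays bounded above and below, and each $x$-derivative of $|x-y|^{-n}$ costs at most a factor $|x-y|^{-1}\le C$ since $|x-y|\ge |y|-|x|\ge 1/2$). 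Consequently
$$
|D^\beta h(x)| \le C_\beta \int_{|y|>1} P(x,y)\,|h(y)|\,dy = C_\beta\, \widetilde h(x),
$$
where $\widetilde h$ solves the same Poisson integral with data $|h|$; since $h(y) = u(y) - w(y)$, and $u\ge 0$ while $w$ has a known sign/size, on $|y|>1$ one controls $|h(y)|$ using nonnegativity of $u$ together with the integrability $u\in\mathcal L_{2s}$ — but actually the cleaner route is: $h$ itself, being $s$-harmonic, satisfies a Harnack-type / mean-value bound so that $\|h\|_{L^\infty(B_{3/4})}$ (hence $\widetilde h$ on $B_{1/2}$) is controlled by $\|h\|_{L^\infty(B_1)} \le \|u\|_{L^\infty(B_1)} + \|w\|_{L^\infty(B_1)} \le \|u\|_{L^\infty(B_1)} + C\|f\|_{L^\infty(B_1)}$. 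The point where nonnegativity of $u$ enters is exactly in controlling the far-field contribution to $h$ without assuming $\|u\|_{L^\infty(\mathbb R^n)}$: I would use $u\ge 0$ to bound $\int_{|y|>1}P(x,y)u(y)\,dy$ from above by iterating the equation / using a barrier, or more directly invoke the refined representation from \cite[Thm 12.2.4]{ChenLiMa} as upgraded in the present paper's Holder step (Theorem \ref{th1.1}), so that all $D^k h$ estimates close in terms of $\|u\|_{L^\infty(B_1)}$ and $\|f\|_{L^\infty(B_1)}$ only. This yields $\|h\|_{C^k(B_{1/2})}\le C_k(\|f\|_{L^\infty(B_1)}+\|u\|_{L^\infty(B_1)})$ for every $k$, in particular smoothness of $h$.

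Combining the two pieces: in the case $2s+\alpha\in\mathbb N$ we have $h\in C^\infty(B_{1/2})$ with the stated bound, and $w\in C^{2s+\alpha-1,\ln L}(B_{1/2})$ with $\|w\|\le C\|f\|_{C^\alpha(\bar B_1)}$; adding, and absorbing the smooth $h$ into the Ln-Lipschitz norm (using $\|h\|_{C^{2s+\alpha}(B_{1/2})}\le C(\|f\|_{L^\infty}+\|u\|_{L^\infty})$ which dominates $\|h\|_{C^{2s+\alpha-1,\ln L}}$), gives
$$
\|u\|_{C^{2s+\alpha-1,\ln L}(B_{1/2}(0))} \le \|h\|_{C^{2s+\alpha-1,\ln L}(B_{1/2})} + \|w\|_{C^{2s+\alpha-1,\ln L}(B_{1/2})} \le C_4\big(\|f\|_{C^\alpha(\bar B_1)}+\|u\|_{L^\infty(B_1)}\big).
$$
For $2s+\alpha\notin\mathbb N$ the same addition with the non-borderline potential estimate for $w$ gives \eqref{eq1.5}. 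The main obstacle I anticipate is the $h$-step: proving the pointwise domination $|D_x^\beta P(x,y)|\le C_\beta P(x,y)$ cleanly (the combinatorics of differentiating the product $(1-|x|^2)^s\,|x-y|^{-n}$ many times is routine but must be organized so the constant depends only on $\beta$ and $n,s$, not on $y$), and — more subtly — arranging the far-field control of $h$ so that genuinely only $\|u\|_{L^\infty(B_1)}$ appears and not a global norm; this is where nonnegativity of $u$ is essential and where I would lean on the improved representation established in the proof of Theorem \ref{th1.1}. The potential-theoretic estimate for $w$, including the sharp logarithmic loss at integer order, is classical and I would quote it.
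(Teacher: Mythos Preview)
Your strategy matches the paper's proof exactly: decompose $u=h+w$, use Lemma~\ref{le2.2} for $h$ and Lemma~\ref{le3.2} for $w$, then add. Two points deserve tightening.

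First, your ``cleaner route'' via a Harnack bound on $h$ is a misstep. You need to control $\widetilde h(x)=\int_{|y|>1}P(x,y)|h(y)|\,dy$, and this integral lives entirely over $|y|>1$; the sup of $h$ on $B_{3/4}$ or $B_1$ is irrelevant to it. The paper's actual mechanism (your first instinct, ``using a barrier'') is: write $|h|\le u+|w|$ on $|y|>1$ using $u\ge0$; the $|w|$ piece is trivially $\le C\|f\|_{L^\infty(B_1)}$; for the $u$ piece, set $v(x)=\int_{|y|>1}P(x,y)u(y)\,dy$ in $B_1$ and $v=u$ outside, so $v$ is the $s$-harmonic extension of $u$, then compare with $u+\|f\|_{L^\infty}g_0$ where $g_0=c(1-|x|^2)_+^s$ solves $(-\Delta)^sg_0=1$. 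Since $(-\Delta)^s(u+\|f\|_{L^\infty}g_0-v)=f+\|f\|_{L^\infty}\ge0$ in $B_1$ and vanishes outside, the maximum principle gives $v\le u+\|f\|_{L^\infty}g_0$, hence $v(x)\le\|u\|_{L^\infty(B_1)}+C\|f\|_{L^\infty(B_1)}$ on $B_{1/2}$. This is precisely where nonnegativity of $u$ enters and is the content of Lemma~\ref{le2.2}.

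Second, the Ln-Lipschitz estimate for $w$ in the borderline case $2s+\alpha\in\mathbb N$ is not something the paper cites; it is proved by hand in Lemma~\ref{le3.2} via the usual near/far splitting of the Riesz potential (cases $2s+\alpha=1$ and $2s+\alpha=2$ separately), and the logarithm appears from integrating $|\zeta-y|^{-n}$ over the annulus $B_2(\zeta)\setminus B_\delta(\zeta)$. You should expect to carry this out rather than quote it.
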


\begin{remark}\label{re1.4}
It should be noted  that $\|u\|_{L^\infty(B_1(0))}$ can be replaced by 	
$\mathop{inf }\limits_{B_{3/4}(0)} u $ in  \eqref{eq1.2}, \eqref{eq1.3}, \eqref{eq1.5} and \eqref{eq1.6}. In fact,
first, it will follow from \eqref{eqi1} that $\|u\|_{L^\infty(B_1(0))}$
can be replaced by $\|u\|_{L^\infty(B_{1/2}(0))}$.
Second,
since
$h +\|w\|_{L^\infty(B_1(0))}+1 > 0 $ and $(-\Delta)^s (h +\|w\|_{L^\infty(B_1(0))}+1 ) =0$ in $B_1(0),$ then for any $x, y\in B_{3/4}(0),$ it holds that
\begin{eqnarray*}
	\begin{aligned}
h (x)+\|w\|_{L^\infty(B_1(0))}+1
\leq & \frac{(9/16-|x|^2)^s (3/4+|y|)^n}{(9/16-|y|^2)^s(3/4-|x|)^n} \left( h(y)+\|w\|_{L^\infty(B_1(0))}+1\right)\\
\leq &C \left( h(y)+\|w\|_{L^\infty(B_1(0))}+1\right),
	\end{aligned}
\end{eqnarray*}
which implies
$$\mathop{\sup}\limits_{B_{3/4}(0)} h \leq C \mathop{\inf}\limits_{B_{3/4}(0)} h.$$
It follows that
\begin{eqnarray*}
	\begin{aligned}
	\|u\|_{L^\infty(B_{1/2}(0))} &\leq \|h\|_{L^\infty(B_{3/4}(0))}+\|w\|_{L^\infty(B_{1}(0))}\\
	&\leq C \mathop{\inf}\limits_{B_{3/4}(0)} h +\|w\|_{L^\infty(B_{1}(0))}\\
	&\leq C  \mathop{\inf}\limits_{B_{3/4}(0)} u+ 2\|w\|_{L^\infty(B_{1}(0))}\\
	&\leq C \mathop{\inf}\limits_{B_{3/4}(0)} u+ 2\|f\|_{L^\infty(B_{1}(0))}.
	\end{aligned}
\end{eqnarray*}
Therefore, the conclusion is true.
\end{remark}

\begin{remark}
The estimates here are established by a direct method. It worth noting that
Jin, Li and Xiong (\cite[Theorem 2.11]{JLX2014}) addressed a similar problem by exploying an extension method and derived
\begin{eqnarray}\label{rfischauder}
\|u\|_{C^{[2s+\alpha], \{2s+\alpha\}}(B_{1/2}(0))}\leq
C\left(\|f\|_{C^\alpha(\bar B_1(0))} +\mathop{\inf}\limits_{B_{3/4}(0)} u\right)
\end{eqnarray}
for $2s+\alpha \notin \mathbb{N}$. As noted in the above Remark \ref{re1.4}, this estimate is essentially equivalent to our result in  (\ref{eq1.5}).
However, besides their basic condition that $u \in \mathcal{L}_{2s}$, due to the use of the extension method, they imposed the additional assumption that $u\in \dot{H}^s (\mathbb{R}^n).$ Clearly, this is a significantly stronger requirement than our condition that $u \in \mathcal{L}_{2s}$. There are many $\mathcal{L}_{2s}$ functions that do not belong to $\dot{H}^s (\mathbb{R}^n)$, for instance,
$$u(x) = |x| \mbox{ for } s > \frac{1}{2} \;\; \mbox{ and } \;\; u(x) = (x_n + 2)_+^\gamma \mbox{ for any }  2s > \gamma > s - \frac{n}{2}$$
are in $\mathcal{L}_{2s}$, however $\|u\|_{\dot{H}^s (\mathbb{R}^n)}= \infty$.
\end{remark}

\begin{remark}
 If there is no sign assumption on $u$, then the conclusions in
Theorem \ref{th1.1} and Theorem \ref{th1.2}  hold with $\|u\|_{L^\infty(B_1(0))}$ replaced by $\|u\|_{L^\infty(\mathbb{R}^n)},$ which have been proved  in the above mentioned references \cite{ChenLiMa, Silvestre2007, DSV2017, Ros-Oton-Serra2016, CS2009CPAM, CS2011ARMA, CS2011AM, LW2021JDE} and so on.
\end{remark}

Our next result concerns the $C^{2s+\alpha}$ regularity for nonnegative solutions of the fractional Poisson equation \eqref{eq1.1} under the condition
$f(x)\in C^{\alpha, Dini}(\bar B_1(0))\,(0<\alpha <1)$,  which seems to be optimal.

\begin{theorem}\label{th1.3}
Assume that $u\in C_{loc}^{1, 1}(\mathbb{R}^n)\cap \mathcal{L}_{2s}$ is a nonnegative solution of \eqref{eq1-3}. Let
$f(x)\in C^{\alpha, Dini}(\bar B_1(0))\,(0<\alpha <1)$.
If $2s+\alpha \in \mathbb{N}$, then  there exists a  constant $C_5>0$ such that
\begin{eqnarray*}
\begin{aligned}
\|u\|_{C^{2s+\alpha}(B_{1/2}(0))}
\leq  C_5\left(\|f\|_{C^{\alpha, Dini}(\bar B_1(0))}+\|u\|_{L^\infty(B_1(0))}\right).
\end{aligned}
\end{eqnarray*}
	\end{theorem}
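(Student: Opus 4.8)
The plan is to mirror the splitting already used for Theorems \ref{th1.1} and \ref{th1.2}: write $u = h + w$ in $B_1(0)$, where
$$
w(x) = C_{n,s}\int_{B_1(0)}\frac{f(y)}{|x-y|^{n-2s}}\,dy, \qquad h(x) = u(x) - w(x),
$$
so that $h$ is $s$-harmonic in $B_1(0)$. For the harmonic part $h$, the Poisson-kernel representation and the self-control of derivatives of $P(x,y)$ give, exactly as in Theorem \ref{th1.2}, a bound of the form
$$
\|D^k h\|_{L^\infty(B_{1/2}(0))} \leq C\big(\|f\|_{L^\infty(B_1(0))} + \|u\|_{L^\infty(B_1(0))}\big)
$$
for every $k$, and in particular $h \in C^{2s+\alpha}(B_{1/2}(0))$ with the right-hand side as stated (since $C^{\alpha,Dini}$ controls $C^\alpha$ which controls $L^\infty$). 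So the whole theorem reduces to the Dini-type estimate
$$
\|w\|_{C^{2s+\alpha}(B_{1/2}(0))} \leq C\,\|f\|_{C^{\alpha,Dini}(\bar B_1(0))},
$$
and here the sign of $u$ plays no role — this is a pure Riesz-potential regularity statement.

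First I would record the relevant integer $m := 2s+\alpha \in \mathbb{N}$ and differentiate under the integral sign: for $|\beta| = m-1$ we have $D^\beta w(x) = C_{n,s}\int_{B_1(0)} f(y)\,D_x^\beta |x-y|^{-(n-2s)}\,dy$ up to the usual care near the singularity, and the kernel $D_x^\beta|x-y|^{2s-n}$ is homogeneous of degree $2s-n-(m-1) = -(n-1+\alpha-2s)\cdots$; I would track the homogeneity precisely, the point being that after taking $m-1$ derivatives one lands in the borderline situation where one more derivative of $w$ is controlled by a modulus-of-continuity integral of $f$ rather than by a Hölder seminorm. Concretely, to estimate $|D^\beta w(x) - D^\beta w(x')|$ for $|\beta|=m-1$ one splits the integral over $B_{2|x-x'|}(x) \cap B_1$ (the "near" part) and its complement (the "far" part). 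On the near part one uses $|f(y)| \leq |f(x)| + \omega_f(|y-x|)$ together with cancellation against $f(x)$ (using that the relevant derivative of the Newtonian-type kernel has mean-value/cancellation properties), bounding the contribution by $\int_0^{C|x-x'|}\omega_f(r)/r\,dr$, which is a tail of the Dini integral $I_{\omega_f}$ and hence small; on the far part a standard kernel-difference estimate $|D^\beta K(x-y) - D^\beta K(x'-y)| \lesssim |x-x'|\,|x-y|^{-(n+1)+(2s-m+1)}$ combined again with $|f(y)-f(x)| \leq \omega_f(|y-x|)$ gives a bound of the form $|x-x'|\int_{|x-x'|}^{2}\omega_f(r)/r^{2}\,dr$, which one checks is controlled by $I_{\omega_f}$ (and by the sharper $I_{\omega_f}^\alpha$ if needed to absorb the extra power). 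Summing, $D^\beta w$ is continuous with modulus dominated by $\int_0^{C\cdot}\omega_f(r)/r\,dr \to 0$, i.e. $w \in C^{m-1,Dini} \subset C^{m}$ in the sense that the top-order derivatives exist and are continuous, with the quantitative bound by $\|f\|_{C^{\alpha,Dini}}$.

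The main obstacle is the borderline bookkeeping in the "far" part: when $m = 2s+\alpha$ is an integer the naive estimate produces a logarithmically divergent integral (this is exactly why the $C^{\alpha}$-only hypothesis yields only Ln-Lipschitz regularity in \eqref{eq1.6}), and one must use the Dini hypothesis to kill that logarithm. I would handle this by the standard dyadic decomposition of the far region into annuli $2^{j}|x-x'| \leq |x-y| \leq 2^{j+1}|x-x'|$, on each of which the kernel difference contributes $\sim 2^{-j}\,\omega_f(2^{j}|x-x'|)$, and then sum the geometric-times-modulus series; the convergence of $\sum_j 2^{-j}\omega_f(2^j\,t)$ (uniformly small as $t\to 0$) is precisely the discrete form of the Dini condition $\int_0^{\cdot}\omega_f(r)/r\,dr < \infty$. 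A secondary technical point is justifying differentiation under the integral sign up to order $m-1$ and the cancellation identities for the singular kernel; these are routine given $f \in C^\alpha \subset C^{Dini}$ and can be quoted from the $C^\alpha$-regularity theory already invoked (\cite{Silvestre2007, Ros-Oton-Serra2016}). Finally I would reassemble $u = h + w$ and add the two estimates to conclude
$$
\|u\|_{C^{2s+\alpha}(B_{1/2}(0))} \leq C_5\big(\|f\|_{C^{\alpha,Dini}(\bar B_1(0))} + \|u\|_{L^\infty(B_1(0))}\big).
$$
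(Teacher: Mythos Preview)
Your overall architecture is exactly the paper's: split $u=h+w$, control $h$ via Lemma~\ref{le2.2}, and reduce everything to a potential estimate for $w$ via a near/far decomposition. That part is fine.

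The gap is in how you use the Dini hypothesis. The space $C^{\alpha,Dini}$ in this paper is defined through $I_{\omega_f}^\alpha=\int_0^{\mathrm{diam}}\omega_f(r)\,r^{-1-\alpha}\,dr$, not the ordinary Dini integral $\int\omega_f(r)\,r^{-1}\,dr$; you repeatedly invoke the latter. Concretely, after $m-1$ derivatives the kernel $D^\beta\Gamma(x-y)$ is homogeneous of degree $-(n-1+\alpha)$ (your own computation $2s-n-(m-1)=-(n-1+\alpha)$ is correct), so the near contribution is $\int_0^{C\delta}\omega_f(r)\,r^{-\alpha}\,dr\le \delta\int_0^{C\delta}\omega_f(r)\,r^{-1-\alpha}\,dr\le C\delta\,I_{\omega_f}^\alpha$, which is a \emph{Lipschitz} bound $C|x-\bar x|$, not merely the tail $\int_0^{C\delta}\omega_f(r)/r\,dr$ you wrote. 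Likewise in the far part the mean-value theorem produces the kernel exponent $-(n+\alpha)$, so the integral is $|x-\bar x|\int_\delta^2\omega_f(r)\,r^{-1-\alpha}\,dr\le |x-\bar x|\,I_{\omega_f}^\alpha$; the $r^{-2}$ you wrote is too strong and is \emph{not} controlled by $I_{\omega_f}^\alpha$ when $\alpha<1$, so your parenthetical ``use the sharper $I_{\omega_f}^\alpha$ if needed'' goes the wrong way.

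The second, more serious, slip is the inference ``$w\in C^{m-1,Dini}\subset C^m$.'' That inclusion is false: a function whose $(m-1)$-th derivatives have a Dini modulus need not have any $m$-th derivatives at all. What the correct exponents above actually give you is $|D^{m-1}w(x)-D^{m-1}w(\bar x)|\le C|x-\bar x|\,\|f\|_{C^{\alpha,Dini}}$, i.e.\ a genuine Lipschitz bound on $D^{m-1}w$; this is precisely what the paper proves (treating the two cases $2s+\alpha=1$ and $2s+\alpha=2$ separately, the latter via Lemma~\ref{le3.1}) and what it records as $\|w\|_{C^{2s+\alpha}(B_{1/2})}\le C\|f\|_{C^{\alpha,Dini}}$. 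Once you track the $r^{-1-\alpha}$ correctly, your dyadic discussion and the ``logarithm-killing'' heuristic become unnecessary: the $\alpha$-Dini integral absorbs the borderline power in one step.
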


\medskip

We now state the singularity and decay estimates for solutions to \eqref{eq1.7}  derived by constructing an auxiliary function combining with
rescaling, for which the following  conditions will be assumed:
 \smallskip

 (H1) $b(x):  \mathbb{R}^n \to \mathbb{R}$  is uniformly bounded and uniformly H\"{o}lder continuous.

 (H2) $f(x, t): \Omega \times [0, \infty) \to \mathbb{R}$ is uniformly H\"{o}lder continuous with respect to $x$ and continuous  with respect to $t$.

 (H3) $f(x, t) \leq C_0(1+t^p)$  uniformly for all $x$ in $\Omega$, and
 $$
 \mathop{\lim}\limits_{t\to \infty}\frac{f(x, t)}{t^p}=K(x), \,\, 1<p<\frac{n+2s}{n-2s},
 $$
 where $K(x)\in (0, \infty)$ is uniformly continuous and $ \mathop{\lim}\limits_{|x|\to \infty} K(x)=\bar C\in (0, \infty)$.

\begin{theorem}\label{th1.4}
Let $\Omega$ be an arbitrary domain of $\mathbb{R}^n$ and $u\in C_{loc}^{1, 1}(\Omega)\cap \mathcal{L}_{2s}$ be a nonnegative solution of \eqref{eq1.7} with $b(x)=0$. Suppose that the assumptions (H2) and (H3) hold. Then there
 exists a positive constant $C_6$ such that
\begin{eqnarray}\label{eq1.8}
u(x)\leq C_6\left(1+dist^{-\frac{2s}{p-1}}(x, \partial \Omega)\right),\,\, x\in \Omega.
\end{eqnarray}
More precisely, if $\Omega \neq \mathbb{R}^n,$ we have
\begin{eqnarray}\label{eq1.9}
u(x)\leq C_6 dist^{-\frac{2s}{p-1}}(x, \partial \Omega),\,\, x\in \Omega.
\end{eqnarray}
In particular,
if $\Omega$ is the whole space $\mathbb{R}^n,$ then
\begin{eqnarray}\label{eq1.10}
u(x)\leq C_6,\,\, x\in \mathbb{R}^n;
\end{eqnarray}
if $\Omega \neq \mathbb{R}^n $ is an exterior domain, i.e. $\Omega \supset \{x\in \mathbb{R}^n \mid |x|>R\},$
 then
\begin{eqnarray}\label{eq1.11}
u(x)\leq C_6 |x|^{-\frac{2s}{p-1}},\,\, |x|\geq 2R;
\end{eqnarray}
and if $\Omega$ is a punctured ball, i.e. $\Omega=B_R(0)\backslash\{0\}$ for some $R>0,$ then
\begin{eqnarray}\label{eq1.11-1}
u(x)\leq C_6 |x|^{-\frac{2s}{p-1}},\,\, 0<|x|<\frac{R}{2}.
\end{eqnarray}
\end{theorem}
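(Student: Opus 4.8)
## Proof Proposal for Theorem \ref{th1.4}

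The plan is to argue by contradiction using the auxiliary-function method outlined in the introduction, reducing the decay/singularity estimate to the Liouville theorem for the fractional Lane--Emden equation in $\mathbb{R}^n$. I will first establish the universal bound \eqref{eq1.8}, and then derive the sharper consequences \eqref{eq1.9}--\eqref{eq1.11-1} by elementary geometric reductions from it.

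\textbf{Step 1: The contradiction hypothesis and the auxiliary function.}
Suppose \eqref{eq1.8} fails. Then there exist domains $\Omega_k$, nonnegative solutions $u_k \in C^{1,1}_{loc}(\Omega_k)\cap\mathcal{L}_{2s}$ of \eqref{eq1.7} with $b=0$, and points $x_k\in\Omega_k$ with
\begin{equation}\label{eq4-2}
M_k := u_k(x_k)\Big(1+\mathrm{dist}(x_k,\partial\Omega_k)^{-\frac{2s}{p-1}}\Big)^{-1} > k \to \infty.
\end{equation}
Because $\Omega_k$ is unbounded in general, $x_k$ need not be near the maximum of $u_k$, so one cannot rescale directly at $x_k$. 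Instead, fix a large exhaustion radius and introduce the auxiliary function
$$
S_k(x) = u_k(x)\Big(1+ \big(\mathrm{dist}(x,\partial\Omega_k)-|x-x_k|\big)_+^{-\frac{2s}{p-1}}\Big)^{-1},
$$
or a variant thereof, designed so that $S_k$ is finite on a ball around $x_k$, vanishes at its boundary, hence attains an interior maximum at some $a_k$, and satisfies $S_k(a_k)\ge S_k(x_k)\ge M_k\to\infty$. A routine computation using the definition of $S_k$ at its maximizer shows that $u_k(a_k)\to\infty$ and that $u_k$ stays comparable to $u_k(a_k)$ on a ball around $a_k$ whose radius, measured in the natural scale $\lambda_k := u_k(a_k)^{-\frac{p-1}{2s}}$, tends to infinity.

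\textbf{Step 2: Rescaling, uniform estimates, and passage to the limit.}
Set $v_k(y) = \lambda_k^{\frac{2s}{p-1}} u_k(a_k + \lambda_k y)$ for $y$ in the rescaled ball $B_k$, whose radius $\to\infty$. By scaling, $(-\Delta)^s v_k(y) = \lambda_k^{\frac{2sp}{p-1}} f(a_k+\lambda_k y,\, \lambda_k^{-\frac{2s}{p-1}} v_k(y)) =: \tilde f_k(y)$ in $B_k$, and by Step 1 the sequence $\{v_k\}$ is \emph{locally uniformly bounded} with $v_k(0)=1$. Here is where the refined regularity of Theorems \ref{th1.1} and \ref{th1.2} is essential: since only the \emph{local} $L^\infty$ norm of $v_k$ enters the Schauder bound, and assumption (H3) gives $|\tilde f_k(y)|\le C_0\lambda_k^{\frac{2sp}{p-1}}(1+\lambda_k^{-\frac{2sp}{p-1}} v_k^p) = C_0(\lambda_k^{\frac{2sp}{p-1}} + v_k^p)$ which is locally bounded, we obtain uniform $C^{2s+\alpha}_{loc}$ (or $C^{[2s+\alpha],\{2s+\alpha\}}_{loc}$) bounds on $v_k$ without needing any control on $v_k$ outside $B_k$. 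By Arzelà--Ascoli and a diagonal argument, $v_k \to v$ in $C^{2s}_{loc}(\mathbb{R}^n)$, with $v\ge 0$, $v(0)=1$, $v$ bounded, $v\in\mathcal{L}_{2s}$. The convergence $v_k\to v$ in $\mathcal{L}_{2s}$-type weighted norms (using the tail control supplied by the uniform bounds and $\mathcal{L}_{2s}$ membership) allows one to pass to the limit in the nonlocal operator; and (H3), $\lambda_k\to 0$ give $\tilde f_k \to K_\infty v^p$ pointwise for a constant $K_\infty\in(0,\infty)$ (a subsequential limit of $K(a_k)$, using $\lim_{|x|\to\infty}K(x)=\bar C$ when $|a_k|\to\infty$, and uniform continuity of $K$ otherwise). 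Hence $v$ solves $(-\Delta)^s v = K_\infty v^p$ in $\mathbb{R}^n$ with $1<p<\frac{n+2s}{n-2s}$; after rescaling $v$ by a constant, the Liouville theorem for the subcritical fractional Lane--Emden equation forces $v\equiv 0$, contradicting $v(0)=1$. This proves \eqref{eq1.8}.

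\textbf{Step 3: From the universal bound to the sharp estimates.}
When $\Omega\ne\mathbb{R}^n$, every $x\in\Omega$ has $\mathrm{dist}(x,\partial\Omega)<\infty$; applying \eqref{eq1.8} at $x$ but run on a rescaled domain — equivalently, applying the scale-invariant form of the argument centered at $x$ with length scale $d=\mathrm{dist}(x,\partial\Omega)$ — removes the additive $1$ and yields \eqref{eq1.9}. Formally: the function $u_\mu(z):=\mu^{\frac{2s}{p-1}}u(x+\mu z)$ solves the same type of equation on the rescaled domain with $\mathrm{dist}(0,\partial\Omega_\mu)=d/\mu$; choosing $\mu=d$ and evaluating \eqref{eq1.8} at $z=0$ gives $d^{\frac{2s}{p-1}}u(x)\le C_6(1+1)$, i.e. \eqref{eq1.9}. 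The special cases are then immediate: $\Omega=\mathbb{R}^n$ makes $\mathrm{dist}(x,\partial\Omega)=+\infty$ in \eqref{eq1.8}, giving \eqref{eq1.10}; if $\Omega\supset\{|x|>R\}$ then for $|x|\ge 2R$ we have $\mathrm{dist}(x,\partial\Omega)\ge |x|-R\ge |x|/2$, so \eqref{eq1.9} gives \eqref{eq1.11}; and if $\Omega=B_R(0)\setminus\{0\}$ then for $0<|x|<R/2$ we have $\mathrm{dist}(x,\partial\Omega)=|x|$, yielding \eqref{eq1.11-1}.

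\textbf{Main obstacle.}
The delicate point is Step 2, specifically justifying the passage to the limit inside the nonlocal integral operator $(-\Delta)^s$ and choosing the auxiliary function $S_k$ so that (a) its maximizer $a_k$ sits well inside $\Omega_k$ with a large rescaled neighborhood, and (b) the $\mathcal{L}_{2s}$-tails of $v_k$ are controlled uniformly — the known regularity estimates only control $v_k$ locally, so the tail contribution to $(-\Delta)^s v_k$ must be handled using the $\mathcal{L}_{2s}$ bound and the structure of the rescaling, not via a global $L^\infty$ bound. This is precisely the difficulty that the refined estimates of Theorems \ref{th1.1}--\ref{th1.2} are designed to overcome, and verifying the limit equation rigorously (rather than the local $C^{2s}$ convergence, which is routine) is the crux of the argument.
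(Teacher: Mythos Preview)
Your strategy matches the paper's: contradict, construct an auxiliary function to locate a point $a_k$ with a large rescaled neighborhood on which the blow-up sequence $v_k$ is bounded, apply the refined local Schauder estimates of Theorems~\ref{th1.1}--\ref{th1.2} to get compactness, and pass to a limiting fractional Lane--Emden equation to invoke Liouville. Two points, however, are genuine gaps rather than routine details.

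\textbf{The auxiliary function.} Your $S_k$ is vague (``or a variant thereof''), and the form you wrote does not obviously vanish on a compact boundary. The paper's choice is concrete and clean: set $r_k:=2k\,u_k(x_k)^{-(p-1)/2s}$, check $r_k<d_k$ so $B_{r_k}(x_k)\subset\Omega_k$, and take
\[
S_k(x)=u_k(x)\,(r_k-|x-x_k|)^{2s/(p-1)}\quad\text{on }B_{r_k}(x_k).
\]
This vanishes on $\partial B_{r_k}(x_k)$, so a maximizer $a_k$ exists; elementary inequalities then give $2k\lambda_k\le r_k-|a_k-x_k|$ and $u_k\le 2^{2s/(p-1)}u_k(a_k)$ on $B_{k\lambda_k}(a_k)$.

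\textbf{Passage to the limit in $(-\Delta)^s$.} You correctly flag this as the crux, but your proposed resolution---``convergence in $\mathcal{L}_{2s}$-type weighted norms''---does not work: you have \emph{no} uniform control on $v_k$ outside $B_k(0)$, hence no uniform $\mathcal{L}_{2s}$ bound, and the tail of $(-\Delta)^s v_k$ need not vanish in the limit. The paper handles this by invoking a result of Du--Jin--Xiong--Yang: under the local $C^{[2s+\beta],\{2s+\beta\}}$ convergence one actually has
\[
\lim_{k\to\infty}(-\Delta)^s v_k(x)=(-\Delta)^s v(x)-b,\qquad b=C_{n,s}\lim_{R\to\infty}\lim_{k\to\infty}\int_{B_R^c}\frac{v_k(y)}{|y|^{n+2s}}\,dy\ge 0,
\]
so the limit equation is $(-\Delta)^s v=K(\bar x)v^p+b$ with an extra nonnegative constant. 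One must then \emph{prove} $b=0$: the paper compares $v$ with the Riesz potential of $K(\bar x)v^p+b$ on balls $B_R$, uses the maximum principle, and lets $R\to\infty$ to obtain $v(x)\ge\int_{\mathbb{R}^n}\frac{C_{n,s}}{|x-y|^{n-2s}}(K(\bar x)v^p(y)+b)\,dy$, which diverges if $b>0$. Without this two-step mechanism (tail defect $b$, then $b=0$) the limit equation is not established; this is the missing idea in your sketch.

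A minor point: your Step~3 derivation of \eqref{eq1.9} from \eqref{eq1.8} by rescaling $u_\mu(z)=\mu^{2s/(p-1)}u(x+\mu z)$ is not valid for general $f$ satisfying (H2)--(H3), since the hypotheses are not scale-invariant and the constant $C_6$ would depend on $\mu$. In the paper, \eqref{eq1.9} is obtained by rerunning the same contradiction argument with the negation of \eqref{eq1.9} in place of \eqref{eq1.8}.
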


\begin{remark}
As a typical example, \eqref{eq1.10} implies that all the nonnegative solutions to
\begin{eqnarray*}
(-\Delta)^s u(x)=u^p(x), \,\, x\in \mathbb{R}^n
\end{eqnarray*}
are bounded in the subcritical case when $1<p<\frac{n+2s}{n-2s}$.
Therefore, in this case,  the nonexistence of nontrivial nonnegative bounded solutions implies the nonexistence of
nontrivial nonnegative solutions, bounded or not.
This is also true for the Lane-Emden system under certain conditions, which may provide new
insights and ideas for completely solving the  Lane-Emden conjecture.
\eqref{eq1.9}, \eqref{eq1.11} and \eqref{eq1.11-1} provide a priori estimates for possible
singularities of local solutions to equation \eqref{eq1.7} and also decay estimates in the case of exterior domains.
 \end{remark}

If \eqref{eq1.7} contains $|\nabla u|$, we also derive  similar singularity and decay estimates.  In this case, we need to assume $s>\frac{1}{2}$ due to the appearance of the gradient term.

\begin{theorem}\label{th1.5}
Let $\Omega$ be an arbitrary domain in $\mathbb{R}^n$ and $u\in C_{loc}^{1, 1}(\Omega)\cap \mathcal{L}_{2s}$ is a nonnegative solution of \eqref{eq1.7}. Suppose that the assumptions (H1), (H2) and (H3) hold.

If $s>\frac{1}{2},$ then there exists a positive constant $C_7$ such that
$$
u(x)+|\nabla u|^{\frac{2s}{2s+p-1}}(x)\leq C_7\left(1+dist^{-\frac{2s}{p-1}}(x, \partial \Omega)\right),\,\, x\in \Omega.
$$
More precisely, if $\Omega \neq \mathbb{R}^n,$ we have
$$
u(x)+|\nabla u|^{\frac{2s}{2s+p-1}}(x)\leq C_7\, dist^{-\frac{2s}{p-1}}(x, \partial \Omega) ,\,\, x\in \Omega.
$$
In particular, if $\Omega$ is the whole space, i.e. $\Omega=\mathbb{R}^n,$ then
$$
u(x)+|\nabla u|^{\frac{2s}{2s+p-1}}(x)\leq C_7,\,\, x\in \mathbb{R}^n;
$$
if $\Omega \neq \mathbb{R}^n$ is an exterior domain, i.e. $\Omega \supset \{x\in \mathbb{R}^n \mid |x|>R\}$,   then
$$
u(x)+|\nabla u|^{\frac{2s}{2s+p-1}}(x)\leq C_7 |x|^{-\frac{2s}{p-1}},\,\, |x|\geq 2R;
$$
and if $\Omega$ is a punctured ball, i.e. $\Omega=B_R(0)\backslash \{0\}$ for some $R>0,$ then
$$
u(x)+|\nabla u|^{\frac{2s}{2s+p-1}}(x)\leq C_7 |x|^{-\frac{2s}{p-1}},\,\, 0<|x|\leq \frac{R}{2}.
$$
\end{theorem}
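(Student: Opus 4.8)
The plan is to follow the same contradiction-plus-auxiliary-function scheme used for Theorem \ref{th1.4}, but now carrying the gradient term along. Suppose, for contradiction, that the asserted bound fails. Then there are domains $\Omega_k$, solutions $u_k$ of \eqref{eq1.7} on $\Omega_k$, and points $x_k\in\Omega_k$ with
\[
M_k:=\Big(u_k(x_k)+|\nabla u_k|^{\frac{2s}{2s+p-1}}(x_k)\Big)\Big(1+\mathrm{dist}(x_k,\partial\Omega_k)^{-\frac{2s}{p-1}}\Big)^{-1}\to\infty.
\]
Following the idea sketched after Theorem \ref{th1.4}, for each $k$ I would build an auxiliary function $S_k$ — roughly of the form $S_k(x)=\big(u_k(x)+|\nabla u_k|^{\frac{2s}{2s+p-1}}(x)\big)\,\mathrm{dist}(x,\partial\Omega_k)^{\frac{2s}{p-1}}$ localized near $x_k$ — take a maximum point $a_k$ of $S_k$ on a suitable neighborhood, and set $\lambda_k:=\big(u_k(a_k)+|\nabla u_k|^{\frac{2s}{2s+p-1}}(a_k)\big)^{-\frac{p-1}{2s}}\to 0$. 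The maximality of $S_k$ at $a_k$ forces $\lambda_k$ to be much smaller than $\mathrm{dist}(a_k,\partial\Omega_k)$, so the rescaled ball $B_{R_k}(a_k)$ with $R_k=c\,\mathrm{dist}(a_k,\partial\Omega_k)/\lambda_k\to\infty$ stays inside $\Omega_k$.

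Next I would rescale: define
\[
v_k(y):=\lambda_k^{\frac{2s}{p-1}}\,u_k(a_k+\lambda_k y),\qquad y\in B_{R_k}(0).
\]
The exponent $q<\frac{2sp}{2s+p-1}$ is exactly the scaling threshold below which the gradient term $b(x)|\nabla u|^q$ is subcritical; one checks that under this rescaling $v_k$ solves
\[
(-\Delta)^s v_k(y)=\lambda_k^{\frac{2s}{p-1}+q-2s}\,b(a_k+\lambda_k y)\,|\nabla v_k|^q(y)+\lambda_k^{\frac{2sp}{p-1}}f\big(a_k+\lambda_k y,\lambda_k^{-\frac{2s}{p-1}}v_k(y)\big),
\]
and the power $\frac{2s}{p-1}+q-2s=\frac{2s-q(2s+p-1)}{p-1}+\text{(sign check)}$ is strictly positive precisely because $q<\frac{2sp}{2s+p-1}$ and $s>\frac12$ (the hypothesis $s>\frac12$ guarantees $\nabla v_k$ is meaningful via the $C^{2s-\varepsilon}$ estimate with $2s-\varepsilon>1$). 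Meanwhile the normalization $u_k(a_k)+|\nabla u_k|^{\frac{2s}{2s+p-1}}(a_k)=\lambda_k^{-\frac{2s}{p-1}}$ translates into $v_k(0)+|\nabla v_k|^{\frac{2s}{2s+p-1}}(0)=1$, and the maximality of $S_k$ gives a uniform bound $v_k(y)+|\nabla v_k|^{\frac{2s}{2s+p-1}}(y)\le C$ on compact sets — so in particular $\{v_k\}$ is locally bounded in $L^\infty$.

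Here is where Theorem \ref{th1.1} and Theorem \ref{th1.2} enter and do the decisive work: the right-hand side of the $v_k$ equation is bounded in $C^\alpha_{loc}$ by the local $L^\infty$ bound just obtained together with (H1)--(H3), so the refined Schauder estimate \eqref{eq1.5} upgrades the local $L^\infty$ bound to a local $C^{2s+\alpha}$ bound — \emph{using only local norms}, which is exactly the feature unavailable in the classical estimates \eqref{eqHC}--\eqref{eqSR} and the reason the doubling lemma is not needed. By Arzelà--Ascoli and a diagonal argument, $v_k\to v$ in $C^{2s+\alpha/2}_{loc}(\mathbb{R}^n)$, and since $\|v_k\|_{\mathcal{L}_{2s}}$ is controlled one passes to the limit in the nonlocal operator; the subcritical powers of $\lambda_k$ kill the $b$-term and replace $f$ by its leading part, so $v$ is a bounded nonnegative solution of $(-\Delta)^s v=K_\infty v^p$ in $\mathbb{R}^n$ (with $K_\infty>0$ a constant, possibly $\bar C$ or the limiting value of $K(a_k)$) satisfying $v(0)+|\nabla v|^{\frac{2s}{2s+p-1}}(0)=1$, hence $v\not\equiv 0$. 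This contradicts the Liouville theorem for the subcritical fractional Lane-Emden equation in $\mathbb{R}^n$. The refined statements \eqref{eq1.9}--\eqref{eq1.11-1} then follow by specializing the distance function: for $\Omega=\mathbb{R}^n$ the distance is infinite so only the constant survives; for exterior domains and punctured balls one compares $\mathrm{dist}(x,\partial\Omega)$ with $|x|$ in the stated ranges.

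The main obstacle I anticipate is the bookkeeping in the auxiliary-function step: one must choose $S_k$ so that (i) its maximum $a_k$ exists and lies well inside $\Omega_k$, (ii) the ratio $\lambda_k/\mathrm{dist}(a_k,\partial\Omega_k)\to 0$, and (iii) the maximality yields the \emph{uniform} local bound on $v_k+|\nabla v_k|^{\frac{2s}{2s+p-1}}$ after rescaling — all while handling the mixed homogeneity of $u$ (scaling like $\lambda^{2s/(p-1)}$) and $|\nabla u|^{2s/(2s+p-1)}$ (which is built to scale the same way). Verifying that the gradient term is genuinely subcritical under this rescaling, i.e. that the exponent of $\lambda_k$ multiplying $b\,|\nabla v_k|^q$ is strictly positive for all $q\in(0,\tfrac{2sp}{2s+p-1})$, is a short but essential computation, and the hypothesis $s>\frac12$ is used both there and to make $\nabla v_k$ a bona fide object controlled by the Hölder estimate of Theorem \ref{th1.1}.
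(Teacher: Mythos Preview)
Your approach mirrors the paper's proof almost exactly: contradiction, an auxiliary function $S_k$ built from $M_k(x)=u_k^{(p-1)/2s}(x)+|\nabla u_k|^{(p-1)/(2s+p-1)}(x)$ times a distance weight on a ball $B_{r_k}(x_k)$, a maximum point $a_k$, rescaling with $\lambda_k = M_k(a_k)^{-1}$, then Theorems~\ref{th1.1}--\ref{th1.2} for local compactness, and finally the subcritical Liouville theorem.

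There is, however, one genuine gap. You write that ``$\|v_k\|_{\mathcal{L}_{2s}}$ is controlled'' and pass directly to the limit equation $(-\Delta)^s v = K_\infty v^p$. But the maximality argument only yields local $L^\infty$ bounds on $v_k$ inside $B_k(0)$; outside, $v_k$ is the rescaling of $u_k$ over all of $\mathbb{R}^n$, and you have no uniform control on its tail. The paper does not claim a uniform $\mathcal{L}_{2s}$ bound either. Instead it invokes a blow-up limit result of Du--Jin--Xiong--Yang \cite{DJXY2023}, which asserts that under local $C^{[2s+\beta],\{2s+\beta\}}$ convergence and nonnegativity one has
\[
\lim_{k\to\infty} (-\Delta)^s v_k(x) = (-\Delta)^s v(x) - b,\qquad b=C_{n,s}\lim_{R\to\infty}\lim_{k\to\infty}\int_{B_R^c}\frac{v_k(y)}{|y|^{n+2s}}\,dy\ge 0.
\]
The limit equation is therefore $(-\Delta)^s v = K(\bar x)v^p + b$, and a separate comparison argument (bounding $v$ from below by truncated Riesz potentials $v_R$ and letting $R\to\infty$) is needed to force $b=0$; otherwise the boundedness of $v$ is violated. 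Without this step the contradiction does not close.

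Two minor corrections: (i) the exponent of $\lambda_k$ multiplying the gradient term is $\dfrac{2sp - q(2s+p-1)}{p-1}$, not $\dfrac{2s}{p-1}+q-2s$; it is strictly positive precisely when $q<\dfrac{2sp}{2s+p-1}$, with no dependence on $s>\tfrac12$; (ii) the hypothesis $s>\tfrac12$ is used only so that the $C^{[2s-\varepsilon],\{2s-\varepsilon\}}$ estimate from Theorem~\ref{th1.1} controls $\nabla v_k$ and hence the H\"{o}lder continuity of the right-hand side, not for the subcriticality of the gradient term.
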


\begin{remark}
In the previous works on $L^\infty$-estimates for  solutions to fractional subcritical Lane-Emden equations (see \cite[Theorem 9.1.1]{ChenLiMa}),  it is often  assumed that $\Omega$
is bounded and each nonnegative solution is bounded in $\bar{\Omega}$.
 Then $\|u\|_{L^\infty(\Omega)}$   can be  bounded by a  uniform positive constant independent of $u.$ While
 Theorems \ref{th1.4} and \ref{th1.5} give
 $L^\infty$ estimate for  nonnegative solutions to the fractional subcritical Lane-Emden equation  in $\mathbb{R}^n$, where
there is no requirement on the boundedness of solutions in advance since
$\mathbb{R}^n$ is unbounded.
\end{remark}

The rest of the paper is organized as follows. In Section 2, the interior  H\"{o}lder continuity of nonnegative solutions to the factional Poisson equation  is obtained, which proves Theorem \ref{th1.1}. In Section 3, according to $2s+\alpha \not\in \mathbb{N}$ and $2s+\alpha \in \mathbb{N}$, we classify  completely  the regularity for nonnegative solutions to the fractional Poisson equation under the condition $f\in C^\alpha(\bar B_1(0))$, including Schauder regularity if $2s+\alpha \not\in \mathbb{N}$ and
Ln-Lipschitz regularity if $2s+\alpha \in \mathbb{N}$ (Theorem \ref{th1.2}).  In addition, we prove the $C^{2s+\alpha}$ regularity under the condition $f\in C^{\alpha, Dini}(\bar B_1(0))$ and $2s+\alpha \in \mathbb{N}$ (Theorem 1.3).
Section 4 is devoted to the proofs of singularity and  decay estimates without gradient term (Theorem \ref{th1.4}),  and
singularity and  decay estimates with gradient term (Theorem \ref{th1.5}).

\section{H\"{o}lder regularity}

In what follows, we will use $C$ and $C_i \, (i=1, 2, \cdots)$ to denote generic positive constants, whose values may differ from line to line.

In this section, we show the  H\"{o}lder regularity for any given  nonnegative solution $u$ to \eqref{eq1.1} by splitting  $u$ into two parts:
\begin{eqnarray*}
\begin{aligned}
u(x)&=\left(u(x)-C_{n, s}\int_{B_1(0)}\frac{f(y)}{|x-y|^{n-2s}}dy\right)+C_{n, s}\int_{B_1(0)}\frac{f(y)}{|x-y|^{n-2s}}dy\\
&:=h(x)+w(x).
\end{aligned}
\end{eqnarray*}

It can be proved that the first part
\begin{equation}\label{eq2.1}
h(x)=u(x)-C_{n, s}\int_{B_1(0)}\frac{f(y)}{|x-y|^{n-2s}}dy
\end{equation}
 is a $s$-harmonic function with
 $$
 (-\Delta)^s h(x)=0,\,\, x\in B_1(0).
 $$
The second part is denoted as
$$
w(x)=C_{n, s}\int_{B_1(0)}\frac{f(y)}{|x-y|^{n-2s}}dy.
$$
It is well-known that $w(x)$ is called the Newtonian potential of $f$ if $s=1$ and $n>2$. In this context, $w(x)$ is briefly called the potential of $f$ in the case that  $0<s<1$ and $n>2s$.

The following maximum principle will be needed in the proof.
\begin{lemma}\label{le2.1}\cite[Proposition 2.17]{Silvestre2007}
Let $\Omega$ be a bounded domain in $\mathbb{R}^n.$ Assume that $u$ is  a lower-semicontinuous function in $\bar \Omega$
and satisfies
\begin{eqnarray*}
\begin{cases}
(-\Delta)^s u(x)\geq 0, & x\in \Omega\\
u(x)\geq 0,& x\in \mathbb{R}^n \backslash \Omega.
\end{cases}
\end{eqnarray*}
in the sense of distribution.
Then $u(x) \geq 0$ in $\mathbb{R}^n.$
\end{lemma}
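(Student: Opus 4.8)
The plan is a contradiction argument based on the strict negativity of the fractional Laplacian at an interior minimum. Suppose the conclusion fails. Since $u\ge0$ on $\mathbb{R}^n\setminus\Omega$ and $\Omega$ is bounded, the (negative) infimum of $u$ over $\mathbb{R}^n$ equals the minimum of $u$ over the compact set $\bar\Omega$, which is attained by lower semicontinuity; and since $u\ge0$ on $\partial\Omega\subset\mathbb{R}^n\setminus\Omega$, this minimum is attained at an interior point, i.e. there is $x_0\in\Omega$ with $u(x_0)=\min_{\mathbb{R}^n}u=:-m<0$. As $\Omega$ is open, $\rho:=\operatorname{dist}(x_0,\mathbb{R}^n\setminus\Omega)>0$.

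Next I would evaluate the fractional Laplacian of $u$ at $x_0$. Because $x_0$ is a global minimum, $u(x_0)-u(y)\le0$ for every $y$, so the integrand in $(-\Delta)^su(x_0)$ has a fixed sign — no principal value is needed and the integral is a well-defined element of $[-\infty,0]$ (in fact finite, since $u\in L^1_{loc}$ forces integrability away from $x_0$, and the nonnegative sign handles the behaviour at $x_0$ in the viscosity formulation below). The strict negativity comes from outside $\Omega$: there $u(y)\ge0$, hence $u(x_0)-u(y)\le u(x_0)=-m$, while $|x_0-y|\ge\rho$, so
$$C_{n,s}\int_{\mathbb{R}^n\setminus\Omega}\frac{u(x_0)-u(y)}{|x_0-y|^{n+2s}}\,dy\le -C_{n,s}\,m\int_{\{|z|\ge\rho\}}\frac{dz}{|z|^{n+2s}}=-c(n,s,\rho)\,m<0,$$
whereas $\int_{\Omega}\frac{u(x_0)-u(y)}{|x_0-y|^{n+2s}}\,dy\le0$. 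Summing, $(-\Delta)^su(x_0)\le-c(n,s,\rho)\,m<0$, which contradicts $(-\Delta)^su\ge0$ in $\Omega$; hence $u\ge0$ in $\mathbb{R}^n$.

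The one point requiring care is to justify the pointwise evaluation at $x_0$ when $u$ is merely lower semicontinuous and $(-\Delta)^su\ge0$ holds only distributionally. I would use the viscosity-supersolution formulation (equivalent, for lower semicontinuous functions in $\mathcal{L}_{2s}$, to the distributional one): the constant function $\varphi\equiv u(x_0)$ touches $u$ from below at $x_0$, and replacing $u$ by $\varphi$ inside a small ball $B_r(x_0)$ with $r<\rho$ yields a function that is smooth at $x_0$ and for which the computation above is entirely classical, the replacement on $B_r(x_0)$ contributing only the identically-zero integral $\int_{B_r(x_0)}\frac{u(x_0)-u(x_0)}{|x_0-y|^{n+2s}}\,dy$. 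An alternative is to mollify $u$, apply the argument to $u_\delta:=u*\eta_\delta$ (for which $(-\Delta)^su_\delta\ge0$ holds classically in $\{x:\operatorname{dist}(x,\partial\Omega)>\delta\}$) and let $\delta\to0$; or to compare $u$ with the $s$-harmonic function in $\Omega$ having the same nonnegative exterior data, whose Poisson-kernel representation is nonnegative, and run an energy estimate on the difference. I expect this regularization bookkeeping to be the only genuine obstacle: the geometric mechanism — that a global minimum lying strictly inside $\Omega$ is ``seen'' by the nonlocal operator through the positive-measure set $\mathbb{R}^n\setminus\Omega\subset\{u\ge0\}$, making it a strict subsolution there — is immediate. (In the paper's own applications of this lemma, e.g. to $h+\text{const}$, the relevant $u$ is $C^\infty$ inside $\Omega$, so the pointwise evaluation is classical and no regularization is needed.)
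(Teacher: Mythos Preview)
The paper does not prove this lemma: it is stated with a citation to \cite[Proposition 2.17]{Silvestre2007} and used as a black box. Your argument is the standard maximum-principle proof (interior global minimum forces strict negativity of the nonlocal operator), and it is essentially correct; in particular you correctly flag that the only genuine technicality is passing from the distributional inequality to a pointwise evaluation at $x_0$, for which the viscosity touching-function device or mollification works. One cosmetic slip: the displayed bound
\[
\int_{\mathbb{R}^n\setminus\Omega}\frac{u(x_0)-u(y)}{|x_0-y|^{n+2s}}\,dy\le -m\int_{\{|z|\ge\rho\}}\frac{dz}{|z|^{n+2s}}
\]
has the inclusion the wrong way around, since $\mathbb{R}^n\setminus\Omega\subset\{y:|x_0-y|\ge\rho\}$ gives only an \emph{upper} bound on $\int_{\mathbb{R}^n\setminus\Omega}|x_0-y|^{-n-2s}\,dy$. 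This does not affect the conclusion: because $\Omega$ is bounded, $\mathbb{R}^n\setminus\Omega$ contains the exterior of some ball $B_R(x_0)$, so $\int_{\mathbb{R}^n\setminus\Omega}|x_0-y|^{-n-2s}\,dy\ge \int_{|z|\ge R}|z|^{-n-2s}\,dz>0$, which is all you need.
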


To derive the H\"{o}lder regularity for nonnegative solutions to the fractional Poisson equation \eqref{eq1.1}, we first estimate the $s$-harmonic
function $h(x)=u(x)-C_{n, s}\int_{B_1(0)}\frac{f(y)}{|x-y|^{n-2s}}dy$.

\begin{lemma}\label{le2.2} Let the assumptions in Theorem \ref{th1.1} hold and $h(x)$ is the  $s$-harmonic function defined in \eqref{eq2.1}. Then there exists a positive  constant $\tilde C_k$ such that for any $k=0, 1, \cdots,$  it holds that
$$
\|D^k h\|_{L^\infty(B_{1/2}(0))} \leq \tilde C_k \left(\|f\|_{L^\infty(B_1(0))}+\|u\|_{L^\infty(B_1(0))}\right).
$$
\end{lemma}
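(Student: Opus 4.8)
The plan is to use the Poisson representation formula for the $s$-harmonic function $h$ recalled in the introduction, namely
$$
h(x)=\int_{|y|>1}P(x,y)\,h(y)\,dy,\qquad |x|<1,
$$
and to differentiate under the integral sign. First I would observe that since $h=u-w$ and, for $f\in L^\infty(B_1(0))$, the potential $w(x)=C_{n,s}\int_{B_1(0)}|x-y|^{2s-n}f(y)\,dy$ satisfies $\|w\|_{L^\infty(\mathbb R^n)}\le C\|f\|_{L^\infty(B_1(0))}$ (a direct estimate, splitting the integral into the region near $x$ and the far region, using $n>2s$), we get the crude but global-in-a-large-ball bound
$$
\|h\|_{L^\infty(B_1(0))}\le \|u\|_{L^\infty(B_1(0))}+C\|f\|_{L^\infty(B_1(0))}.
$$
However, $h$ need not be globally bounded, so the representation formula integrates $h(y)$ over $|y|>1$ where we have no pointwise control; the way around this is to rescale the representation to a smaller ball. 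Concretely, fix $x_0$ with $|x_0|\le 1/2$; then $h$ is $s$-harmonic in $B_{1/2}(x_0)\subset B_1(0)$, and the Poisson kernel for that ball represents $h(x)$ for $x\in B_{1/2}(x_0)$ as an integral of $h(y)$ over $|y-x_0|>1/2$ — but that still reaches outside $B_1(0)$. The correct remedy, which I expect is what the authors do, is to use instead the kernel on $B_{3/4}(x_0)$ or to exploit the Harnack-type / Poisson-kernel bound of Remark 1.4: because $h+\|w\|_{L^\infty}+1>0$ and is $s$-harmonic in $B_1(0)$, one has $\sup_{B_{3/4}}h\le C\inf_{B_{3/4}}h\le C(\|u\|_{L^\infty(B_1(0))}+\|f\|_{L^\infty(B_1(0))})$, so $h$ is in fact bounded on $B_{3/4}(0)$ by the right-hand side. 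Then on $B_{3/4}(0)$ write the Poisson representation for the ball $B_{3/4}(0)$: for $|x|<3/4$,
$$
h(x)=\int_{|y|>3/4}P_{3/4}(x,y)\,h(y)\,dy,
$$
where $P_{3/4}$ is the rescaled Poisson kernel — but now the boundary values $h(y)$ for $|y|$ slightly larger than $3/4$ are still uncontrolled.

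The genuinely correct device is therefore iterated: I would use that $h$ is bounded on $B_{3/4}(0)$ (from the Harnack estimate above), and then represent $h$ on $B_{1/2}(0)$ using the Poisson kernel of the ball $B_{3/4}(0)$ only after noting $\int_{|y|>3/4}P_{3/4}(x,y)|h(y)|\,dy$ — wait, this still needs $h$ outside. Let me instead take the cleaner route: $h$ is $s$-harmonic in $B_{3/4}(0)$ and bounded there; rescale so that $h$ is $s$-harmonic and bounded in $B_1$, apply the interior-derivative estimate for $s$-harmonic functions. That is, the key lemma to prove is: if $v$ is $s$-harmonic in $B_1(0)$ and $v\in\mathcal L_{2s}$, then $\|D^kv\|_{L^\infty(B_{1/2})}\le C_k(\|v\|_{L^\infty(B_{3/4})}+\|v\|_{\mathcal L_{2s}})$, and in our situation the $\mathcal L_{2s}$ piece is controlled because $\|h\|_{\mathcal L_{2s}}\le\|u\|_{\mathcal L_{2s}}+\|w\|_{\mathcal L_{2s}}$ and $u\in\mathcal L_{2s}$ — hmm, but the theorem's bound is stated purely in terms of $\|u\|_{L^\infty(B_1)}$ and $\|f\|_{L^\infty(B_1)}$, with no $\mathcal L_{2s}$ term. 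So the $\mathcal L_{2s}$ tail must actually be absorbed, and the mechanism is precisely the positivity: $h+\|w\|_{L^\infty}+1>0$ everywhere, so one may apply the Poisson representation on $B_{3/4}(0)$ to the \emph{nonnegative} function $\tilde h:=h+\|w\|_{L^\infty(B_1(0))}+1$, giving for $|x|<1/2$
$$
\partial^\beta\tilde h(x)=\int_{|y|>3/4}\partial_x^\beta P_{3/4}(x,y)\,\tilde h(y)\,dy ,
$$
and then estimate $|\partial_x^\beta P_{3/4}(x,y)|\le C_\beta\,P_{3/4}(x_*,y)$ for a suitable $x_*$ (this is the "higher derivatives of $P$ controlled by itself" observation flagged in the introduction), so that
$$
|\partial^\beta\tilde h(x)|\le C_\beta\int_{|y|>3/4}P_{3/4}(x_*,y)\,\tilde h(y)\,dy = C_\beta\,\tilde h(x_*)\le C_\beta\Big(\|h\|_{L^\infty(B_{3/4})}+\|w\|_{L^\infty(B_1)}+1\Big).
$$
Finally combine with $\|h\|_{L^\infty(B_{3/4})}\le C(\|u\|_{L^\infty(B_1)}+\|f\|_{L^\infty(B_1)})$ (from Remark 1.4's Harnack inequality) and $\|w\|_{L^\infty(B_1)}\le C\|f\|_{L^\infty(B_1)}$, and since $D^k h=D^k\tilde h$ for $k\ge1$ while $\|h\|_{L^\infty(B_{1/2})}$ is already bounded, the claim follows with $\tilde C_k=C_k$.

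The main obstacle is the derivative estimate on the Poisson kernel: one must show that for every multi-index $\beta$ there is a constant $C_\beta$ and (for each fixed $x$ with $|x|<1/2$) a point $x_*=x_*(x)$, or more simply a comparison function, such that $|\partial_x^\beta P_{3/4}(x,y)|\le C_\beta\,\Phi_\beta(y)$ with $\int_{|y|>3/4}\Phi_\beta(y)\,\tilde h(y)\,dy\le C_\beta\,\tilde h$ evaluated somewhere, i.e. $\Phi_\beta$ is itself (a multiple of) a Poisson kernel $P_{3/4}(x_*,\cdot)$. This is the computation advertised in the introduction as "the higher order derivatives of $P(x,y)$ can be controlled by itself": writing $P_{3/4}(x,y)=c_{n,s}\big((9/16-|x|^2)/(|y|^2-9/16)\big)^s|x-y|^{-n}$, differentiation in $x$ hits either the factor $(9/16-|x|^2)^s$ — producing a negative power of the distance to the sphere $|x|=3/4$, which is harmless for $|x|\le1/2$ since that distance is $\ge1/4$ — or the factor $|x-y|^{-n}$, producing extra negative powers of $|x-y|$; since $|x-y|\ge|y|-1/2\ge(|y|-3/4)\cdot c$ uniformly for $|x|\le1/2$ and also $|x-y|\ge1/4$ is false, one needs to be careful for $y$ near the sphere. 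One then checks that each such derivative is bounded, for $|x|\le1/2$, by $C_\beta$ times $P_{3/4}(0,y)$ (taking $x_*=0$), using that $(9/16-|x|^2)^s$, its $x$-derivatives, and $|x-y|^{-n-|\beta|}$ are all $\le C_\beta(9/16)^s|y-0|^{-n}\cdot(\text{something uniformly bounded})$ on $|x|\le1/2$. Carrying out this routine but slightly delicate bookkeeping — and in particular justifying differentiation under the integral sign, which follows from the same pointwise bounds via dominated convergence once one notes $\tilde h\in\mathcal L_{2s}$ so that $\int_{|y|>3/4}P_{3/4}(0,y)\tilde h(y)\,dy<\infty$ — completes the proof.
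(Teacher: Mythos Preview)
Your approach is essentially correct and takes a genuinely different route from the paper's. Both arguments hinge on the same kernel estimate --- that $|D^k_x P_r(x,y)| \leq C_k\, P_r(x,y)$ uniformly for $x$ in a compact subset of $B_r$ --- but they diverge in how they control the resulting integral. The paper works on $B_1$ itself: it writes $|D^k h(x)|\le C_k\int_{|y|>1}P(x,y)|h(y)|\,dy$, splits $h(y)=u(y)-w(y)$ on $|y|>1$, bounds the $w$-piece directly from $|w|\le C\|f\|_{L^\infty(B_1)}$, and for the $u$-piece recognizes $v(x)=\int_{|y|>1}P(x,y)\,u(y)\,dy$ as the $s$-harmonic function in $B_1$ with exterior data $u$, then compares $v$ with $u+g$ via the maximum principle, where $g=\|f\|_{L^\infty(B_1)}\cdot c(1-|x|^2)_+^s$ is an explicit barrier. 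Your route avoids the barrier and the maximum principle entirely: shrink to $B_{3/4}$, shift to the nonnegative $s$-harmonic function $\tilde h=h+M$, and use positivity to identify $\int_{|y|>3/4}P_{3/4}(x,y)\,\tilde h(y)\,dy=\tilde h(x)$ directly, so that $|D^k\tilde h(x)|\le C_k\,\tilde h(x)$ with no comparison argument needed. This is more elementary; the paper's version buys you the slightly sharper constant and works on the original ball.

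Two small repairs are needed. First, drop the ``$+1$'': with $M=\|w\|_{L^\infty(\mathbb R^n)}\le C\|f\|_{L^\infty(B_1)}$ you already have $\tilde h=u-w+M\ge u\ge 0$ everywhere, whereas the spurious $+1$ would survive into the final bound and spoil the homogeneous estimate in the statement. Second, do not invoke Remark~1.4's Harnack inequality --- it is both circular (that remark itself relies on the inequality (\ref{eqi1}) proved inside Lemma~\ref{le2.2}) and unnecessary, since $\|h\|_{L^\infty(B_{3/4})}\le \|u\|_{L^\infty(B_1)}+\|w\|_{L^\infty(B_1)}$ follows trivially from $h=u-w$ on $B_1$, which you in fact already observed at the outset.
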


\begin{proof}
Since $h(x)\in \mathcal{L}_{2s},$ by Theorem 4.1.2 in \cite{ChenLiMa} or Lemma 4.1 in \cite{LLWX}, it holds that
$h(x)\in C^\infty(B_1(0)).$ Then it follows from Theorem 2.10 in \cite{Bucur} that $h(x)$ in $B_1(0)$ can be expressed in terms of an integral on $\mathbb{R}^n\backslash B_1(0)$ consisting of a Poisson kernel and $h$ itself:
\begin{equation}\label{eq2.3}
h(x)=\int_{|y|>1} P(x, y)h(y)dy, \,\, \forall |x|<1,
\end{equation}
where
\begin{eqnarray*}
P(x, y)=
\begin{cases}
\frac{\Gamma (n/2)\sin(\pi s)}{\pi^{\frac{n}{2}+1}}\left(\frac{1-|x|^2}{|y|^2-1}\right)^s\frac{1}{|x-y|^n}, &|y|>1,\\
0,& |y|\leq 1
\end{cases}
\end{eqnarray*}
is the so-called Poisson kernel.

Direct calculations, for any $|x|<\frac{1}{2}$ and $|y|>1,$ imply
\begin{eqnarray*}
\begin{aligned}
\frac{\partial P}{\partial x_i}(x, y)
=\left(\frac{-2s x_i}{1-|x|^2}+\frac{-n(x_i-y_i)}{|x-y|^2}\right)P(x, y).
\end{aligned}
\end{eqnarray*}
Then for any $|x|<\frac{1}{2}$ and $|y|>1,$ it holds that
\begin{eqnarray}\label{eq2.4}
\begin{aligned}
\left|\frac{\partial P}{\partial x_i}(x, y)\right|
\leq \left(\frac{2s |x|}{1-|x|^2}+\frac{n}{|x-y|}\right)P(x, y)
\leq C_1P(x, y).
\end{aligned}
\end{eqnarray}
For the second order derivatives of $h(x),$ for any  $|x|<\frac{1}{2}$ and $|y|>1,$ and each $i, j=1, \cdots, n$, one has
\begin{eqnarray*}
\begin{aligned}
\frac{\partial^2 P}{\partial x_i\partial x_j}(x, y)
=&\frac{\partial}{\partial x_j}\left(\frac{-2s x_i}{1-|x|^2}+\frac{-n(x_i-y_i)}{|x-y|^2}\right)P(x, y)\\
&+\left(\frac{-2s x_i}{1-|x|^2}+\frac{-n(x_i-y_i)}{|x-y|^2}\right)\frac{\partial P}{\partial x_j}(x, y).
\end{aligned}
\end{eqnarray*}
Then the smoothness of the function $\frac{-2s x_i}{1-|x|^2}+\frac{-n(x_i-y_i)}{|x-y|^2}$ and \eqref{eq2.4} imply
\begin{eqnarray*}
\begin{aligned}
\left|\frac{\partial^2 P}{\partial x_i\partial x_j}(x, y)\right|
\leq &\left|\frac{\partial}{\partial x_j}\left(\frac{-2s x_i}{1-|x|^2}+\frac{-n(x_i-y_i)}{|x-y|^2}\right)\right|P(x, y)\\
&+\left|\frac{-2s x_i}{1-|x|^2}+\frac{-n(x_i-y_i)}{|x-y|^2}\right|\frac{\partial P}{\partial x_j}(x, y)\\
\leq & C_2 P(x, y).
\end{aligned}
\end{eqnarray*}
By recursion, for any $k=1, 2, \cdots,$ we conclude that there exists a constant $C_k>0$ such that
\begin{eqnarray}\label{eq2.5}
\left|D^k P(x, y)\right|\leq C_k P(x, y), \,\, |x|<\frac{1}{2}\,\, \mbox{and}\,\,  |y|>1.
\end{eqnarray}

Due to \eqref{eq2.3} and  \eqref{eq2.5}, one can obtain
\begin{eqnarray}\label{eqhd}
\begin{aligned}
|D^k h(x)|=&\left|\int_{|y|>1} D^k P(x, y)h(y)dy\right|\\
=&\left|\int_{|y|>1} D^k P(x, y)(u(y)-w(y))dy\right|\\
\leq &\int_{|y|>1} \left|D^k P(x, y)\right|(u(y)+|w(y)|)dy\\
\leq & C_k \left(\int_{|y|>1}P(x, y)u(y)dy+\int_{|y|>1}P(x, y)|w(y)|dy\right)\\
:=& C_k (I_1+I_2),\,\, x\in B_{1/2}(0),
\end{aligned}
\end{eqnarray}
where the fact that $u$ is nonnegative has been used.

Note that
$$
|w(x)|=C_{n, s}\left|\int_{B_1(0)}\frac{f(y)}{|x-y|^{n-2s}}dy\right|\leq C\|f\|_{L^\infty(B_1(0))}, \,\, x\in \mathbb{R}^n \backslash B_{1}(0),
$$
hence
\begin{eqnarray}\label{eqi2}
I_2=\int_{|y|>1}P(x, y)|w(y)|dy\leq  C\|f\|_{L^\infty(B_1(0))}\int_{|y|>1}P(x, y)dy\leq  C\|f\|_{L^\infty(B_1(0))}.
\end{eqnarray}

To estimate $I_1,$ we define
\begin{eqnarray*}
v(x)=
\begin{cases}
\int_{|y|>1}P(x, y) u(y)dy, & x\in B_1(0),\\
u(x),& x\in \mathbb{R}^n\backslash B_1(0).
\end{cases}
\end{eqnarray*}
It follows from  the definition of $P(x, y)$ and  $u \in \mathcal{L}_{2s}$ that $v \in \mathcal{L}_{2s}.$

By virtue of \cite[Theorem 4.1.1]{ChenLiMa}, we obtain that $v(x)$ satisfies
\begin{eqnarray*}
\begin{cases}
(-\Delta)^s v(x)=0, & x\in B_1(0),\\
v(x)=u(x),& x\in \mathbb{R}^n\backslash B_1(0).
\end{cases}
\end{eqnarray*}

Recall that the function
$$
g_0(x)=\frac{2^{-2s}\Gamma(n/2)}{\Gamma((n+2s)/2)\Gamma(1+s)}(1-|x|^2_+)^s
$$
satisfies
$$
(-\Delta)^s g_0(x)=1.
$$
Set
$$
g(x)=\|f\|_{L^\infty(B_1(0))}g_0(x), \,\, x \in \mathbb{R}^n.
$$

Since $v-(u+g)\in \mathcal{L}_{2s}$ and satisfies
\begin{eqnarray*}
\begin{cases}
(-\Delta)^s (u+g-v)(x)=f(x)+\|f\|_{L^\infty(B_1(0))} \geq 0, & x\in B_1(0),\\
u(x)+g(x)-v(x)=0,& x\in \mathbb{R}^n\backslash B_1(0),
\end{cases}
\end{eqnarray*}
Lemma \ref{le2.1}  implies that
$$
v(x)\leq u(x)+g(x),\,\, x\in \mathbb{R}^n.
$$
It follows that
\begin{eqnarray}\label{eqi1}
I_1=\int_{|y|>1}P(x, y)u(y)dy\leq u(x)+g(x)\leq \|u\|_{L^\infty(B_1(0))}+C\|f\|_{L^\infty(B_1(0))},\,\, x \in B_{1/2}(0).
\end{eqnarray}
Collecting estimates \eqref{eqhd}--\eqref{eqi1} shows that
  for any $k=0, 1, \cdots ,$ there exists a positive  constant $\tilde C_k$ such that
  \begin{eqnarray*}
\begin{aligned}
\|D^k h\|_{L^\infty(B_{1/2}(0))} \leq  \tilde C_k\left(\|f\|_{L^\infty(B_1(0))}+\|u\|_{L^\infty(B_1(0))}\right).
\end{aligned}
\end{eqnarray*}
This completes the proof of Lemma \ref{le2.2}.
\end{proof}

Denote the fundamental solution of the fractional Laplacian $(-\Delta)^s$ in $\mathbb{R}^n$ by
$$
\Gamma(x, y):=\frac{C_{n, s}}{|x-y|^{n-2s}},\,\, n\geq 3.
$$
Then
$$
w(x)=\int_{B_1(0)} \Gamma (x, y) f(y)dy.
$$

\begin{lemma} \label{le3.1} If $f \in L^\infty(B_1(0))$ and  $2s>1,$ or
$f \in C^\alpha(\bar B_1(0))\,\, (0<\alpha<1)$ and  $2s+\alpha>1,$
 then $w \in C^1(B_1(0))$ satisfies
$$
\partial_i w(x)=\int_{B_1(0)}\partial_i \Gamma(x, y)\left(f(x)-f(y)\right)dy +f(x)\int_{\partial B_1(0)}\Gamma (x, y)\nu_i dS_y, \,\, i=1, \cdots, n,
$$
where $\nu_i$ is the $i$-th component of the unit outward normal vector of $\partial B_1(0).$
\end{lemma}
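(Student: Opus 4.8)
The strategy is the classical one for Newtonian-type potentials, adapted to the fractional kernel $\Gamma(x,y)=C_{n,s}|x-y|^{-(n-2s)}$. The key observation is that $\partial_{x_i}\Gamma(x,y) = -\partial_{y_i}\Gamma(x,y) = -(n-2s)C_{n,s}(x_i-y_i)|x-y|^{-(n-2s+2)}$, which is homogeneous of degree $-(n-2s+1)$ in $x-y$ and hence has a non-integrable singularity of order $n-2s+1$. Under either hypothesis ($f\in L^\infty$ with $2s>1$, or $f\in C^\alpha$ with $2s+\alpha>1$), the ``extra decay'' coming from the factor $f(x)-f(y)$ — which is $O(1)$ in the first case and $O(|x-y|^\alpha)$ in the second — exactly compensates: the integrand $\partial_i\Gamma(x,y)(f(x)-f(y))$ is dominated by $C|x-y|^{-(n-2s+1)}$ or $C|x-y|^{-(n-2s+1-\alpha)}$, and since $n-2s+1<n$ (resp. $n-2s+1-\alpha<n$) this is locally integrable on $B_1(0)$. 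So the formula on the right-hand side is well-defined.

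First I would fix $x\in B_1(0)$ and, to justify the claimed formula, introduce the regularized potential obtained by excising a ball: for small $\varepsilon>0$ set $w_\varepsilon(x)=\int_{B_1(0)\setminus B_\varepsilon(x)}\Gamma(x,y)f(y)\,dy$. On this domain the integrand is smooth in $x$, so we may differentiate under the integral sign, $\partial_i w_\varepsilon(x)=\int_{B_1(0)\setminus B_\varepsilon(x)}\partial_i\Gamma(x,y)f(y)\,dy$ plus a boundary contribution from $\partial B_\varepsilon(x)$; the latter is $O(\varepsilon^{2s})$ (resp. $o(1)$) and vanishes as $\varepsilon\to0$. Now rewrite $\partial_i\Gamma(x,y)f(y)=\partial_i\Gamma(x,y)(f(y)-f(x))+f(x)\partial_i\Gamma(x,y)$. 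For the first piece, the bound above shows the integrand is dominated by a fixed $L^1(B_1(0))$ function uniformly in $\varepsilon$, so by dominated convergence $\int_{B_1(0)\setminus B_\varepsilon(x)}\partial_i\Gamma(x,y)(f(y)-f(x))\,dy \to \int_{B_1(0)}\partial_i\Gamma(x,y)(f(x)-f(y))\,dy$ (up to the sign) as $\varepsilon\to0$. For the second piece, using $\partial_{x_i}\Gamma=-\partial_{y_i}\Gamma$ and the divergence theorem on $B_1(0)\setminus B_\varepsilon(x)$, $f(x)\int_{B_1(0)\setminus B_\varepsilon(x)}\partial_i\Gamma(x,y)\,dy = -f(x)\int_{B_1(0)\setminus B_\varepsilon(x)}\partial_{y_i}\Gamma(x,y)\,dy = f(x)\big(\int_{\partial B_1(0)}\Gamma(x,y)\nu_i\,dS_y - \int_{\partial B_\varepsilon(x)}\Gamma(x,y)\nu_i\,dS_y\big)$, and the inner-sphere term is $O(\varepsilon\cdot\varepsilon^{-(n-2s)}\cdot\varepsilon^{n-1})=O(\varepsilon^{2s})\to0$. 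Adding the two limits gives exactly the asserted expression for $\partial_i w(x)$.

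It then remains to show this candidate derivative is continuous in $x$ and genuinely equals $\partial_i w$. Continuity follows because $w_\varepsilon\to w$ uniformly on compact subsets of $B_1(0)$ (the tail $\int_{B_\varepsilon(x)}\Gamma(x,y)f(y)\,dy$ is $O(\varepsilon^{2s})$ uniformly) and $\partial_i w_\varepsilon$ converges uniformly on compacts to the right-hand side (again via the dominating $L^1$ bound, together with equicontinuity of the truncated integrals in $x$ — here I would use either the $C^\alpha$ modulus of $f$ or, in the $L^\infty$ case, an absolute-continuity-of-the-integral argument near the singularity). A standard theorem on differentiating under uniform limits then yields $w\in C^1(B_1(0))$ with the stated formula. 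The main technical obstacle is the uniform-in-$x$ control of the singular integral $\int_{B_1(0)}\partial_i\Gamma(x,y)(f(x)-f(y))\,dy$ as $x$ varies — i.e., verifying that the ``$L^1$ domination'' and the convergence as $\varepsilon\to0$ are locally uniform in $x$ — since this is what upgrades pointwise differentiability to $C^1$ regularity; everything else is bookkeeping on powers of $\varepsilon$ in the boundary terms.
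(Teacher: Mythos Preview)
Your argument is correct and is exactly the standard potential-theory proof the paper has in mind; the paper itself gives no details, only the remark that the proof ``is similar to the case $s=1$'' in Gilbarg--Trudinger. One minor bookkeeping slip: the moving-domain boundary contribution from $\partial B_\varepsilon(x)$ is $O(\varepsilon^{2s-1})$, not $O(\varepsilon^{2s})$, and when $2s\le 1$ (the $C^\alpha$ hypothesis) you need the odd-symmetry cancellation $\int_{\partial B_\varepsilon(x)}(x_i-y_i)\,dS_y=0$ to replace $f(y)$ by $f(y)-f(x)$ and gain the extra factor $\varepsilon^\alpha$, yielding $O(\varepsilon^{2s+\alpha-1})\to 0$ --- but this does not affect the validity of your outline.
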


The proof of the lemma is standard  which is similar to the case $s=1$  in \cite{GT}.

\begin{proof}[Proof of Theorem \ref{th1.1}] Due to Lemma \ref{le2.2},
it suffices  to show
\begin{equation*}
\|w\|_{C^{[2s], \{2s\}}(B_{1/2}(0))}\leq C \left(\|f\|_{L^\infty(B_1(0))}+\|u\|_{L^\infty(B_1(0))}\right), \,\, \mbox{if}\,\, s\neq \frac{1}{2},
\end{equation*}
and
\begin{equation*}
\|w\|_{C^{0, \ln L}(B_{1/2}(0))}\leq C \left(\|f\|_{L^\infty(B_1(0))}+\|u\|_{L^\infty(B_1(0))}\right), \,\, \mbox{if}\,\, s= \frac{1}{2}.
\end{equation*}

For any $0<s<1,$ it is clear that
\begin{eqnarray}\label{eq2.5-2}
\|w\|_{L^\infty(B_{1/2}(0))}=C_{n, s}\sup_{B_{1/2}(0)}\int_{B_1(0)}\frac{f(y)}{|x-y|^{n-2s}}dy\leq C \|f\|_{L^\infty(B_1(0))}.
\end{eqnarray}

For any $x, \bar x \in B_{1/2}(0), $ let $\zeta=\frac{x+\bar x}{2}$ be the midpoint of $x$ and $\bar x$, and  $\delta=|x-\bar x|$ be the distance between $x$ and $\bar x.$ Then
\begin{eqnarray}\label{eq2.6}
\begin{aligned}
\left|w(x)-w(\bar x)\right|
=& C_{n, s}\left|\int_{B_1(0)}\left(\frac{1}{|x-y|^{n-2s}}-\frac{1}{|\bar x-y|^{n-2s}}\right)f(y)dy\right|\\
\leq &C_{n, s} \|f\|_{L^\infty(B_1(0))} \int_{B_2(\zeta)}\left|\frac{1}{|x-y|^{n-2s}}-\frac{1}{|\bar x-y|^{n-2s}}\right|dy\\
\leq &C_{n, s}\|f\|_{L^\infty(B_1(0))}  \left(\int_{B_\delta(\zeta)}\left|\frac{1}{|x-y|^{n-2s}}-\frac{1}{|\bar x-y|^{n-2s}}\right|dy\right.\\
&\left.+\int_{B_2(\zeta)\backslash B_\delta(\zeta)}\left|\frac{1}{|x-y|^{n-2s}}-\frac{1}{|\bar x-y|^{n-2s}}\right|dy\right).
\end{aligned}
\end{eqnarray}

Since $B_\delta(\zeta)\subset B_{\frac{3}{2}\delta}(x),$  if $s\neq \frac{1}{2},$ one has
\begin{eqnarray*}
\int_{B_\delta(\zeta)}\frac{1}{|x-y|^{n-2s}}dy\leq  \int_{B_{\frac{3}{2}\delta}(x)}\frac{1}{|x-y|^{n-2s}}dy
\leq C \int_0^{\frac{3}{2}\delta}  r^{2s-1}dr
\leq C \delta^{2s}.
\end{eqnarray*}
Thus for any  $s\in (0, 1),$ it holds that
\begin{eqnarray}\label{eq2.7}
\int_{B_\delta(\zeta)}\left|\frac{1}{|x-y|^{n-2s}}-\frac{1}{|\bar x-y|^{n-2s}}\right|dy
\leq \int_{B_\delta(\zeta)}\frac{1}{|x-y|^{n-2s}}dy+\int_{B_\delta(\zeta)}\frac{1}{|\bar x-y|^{n-2s}}dy
\leq C \delta^{2s}.
\end{eqnarray}
In addition, noting that
$$
|\xi-y|\geq |\zeta -y|-|\zeta-\xi|\geq \frac{|\zeta-y|}{2},\,\, y\in B_2(\zeta)\backslash B_\delta(\zeta),
$$
by the mean value theorem, if $0<s< \frac{1}{2},$ one can get
\begin{eqnarray}\label{eq2.8}
\begin{aligned}
&\int_{B_2(\zeta)\backslash B_\delta(\zeta)}\left|\frac{1}{|x-y|^{n-2s}}-\frac{1}{|\bar x-y|^{n-2s}}\right|dy\\
\leq & C\int_{B_2(\zeta)\backslash B_\delta(\zeta)}\frac{|x-\bar x|}{|\xi-y|^{n-2s+1}}dy \\
\leq & C |x-\bar x|\int_{B_2(\zeta)\backslash B_\delta(\zeta)} \frac{2}{|\zeta -y|^{n-2s+1}}dy\\
\leq & C |x-\bar x| \int_{\delta}^2 r^{2s-2}dr\\
\leq & C |x-\bar x|^{2s},
\end{aligned}
\end{eqnarray}
 where $\xi $ lies between $x$ and $\bar x.$

 If $s=\frac{1}{2},$ a similar argument as \eqref{eq2.8} yields
 \begin{eqnarray}\label{eq2.9}
 \begin{aligned}
 &\int_{B_2(\zeta)\backslash B_\delta(\zeta)}\left|\frac{1}{|x-y|^{n-2s}}-\frac{1}{|\bar x-y|^{n-2s}}\right|dy\\
\leq & C\int_{B_2(\zeta)\backslash B_\delta(\zeta)}\frac{|x-\bar x|}{|\xi-y|^{n}}dy \\
\leq & C |x-\bar x|\int_{B_2(\zeta)\backslash B_\delta(\zeta)} \frac{2}{|\zeta -y|^{n}}dy\\
\leq & C|x-\bar x|\big|\ln \min(|x-\bar x|, 1/2)\big|.
\end{aligned}
 \end{eqnarray}
 It follows from   \eqref{eq2.6}--\eqref{eq2.9}  that   for any $x, \bar x \in B_{1/2}(0), $
 \begin{eqnarray}\label{eq2.10}
 |w(x)-w(\bar x)|\leq C |x-\bar x|^{2s}\|f\|_{L^\infty(B_1(0))},\,\, \mbox{ if }\, 0<s<\frac{1}{2},
\end{eqnarray}
 and
 \begin{eqnarray}\label{eq2.11}
 |w(x)-w(\bar x)|\leq C |x-\bar x|\big|\ln \min(|x-\bar x|, 1/2)\big|\|f\|_{L^\infty(B_1(0))},\,\, \mbox{ if }\, s= \frac{1}{2}.
 \end{eqnarray}

Next we estimate $[\partial_i w]_{C^{2s-1}(B_{1/2}(0))},\, i=1, \cdots, n,$  by showing that
  \begin{eqnarray}\label{eq2.16}
 |\partial_i w(x)-\partial_i w(\bar x)|\leq C |x-\bar x|^{2s-1}\|f\|_{L^\infty(B_1(0))},\,\, \mbox{ if }\, \frac{1}{2}<s<1
\end{eqnarray}
 for any  $x, \bar x \in B_{1/2}(0) $.

 If  $\frac{1}{2}<s<1,$
 by Lemma \ref{le3.1}, for any $x \in B_{1/2}(0)$ and $i=1, \cdots, n,$ one has
\begin{eqnarray*}
\begin{aligned}
& |\partial_i w(x)|\\
=&\bigg|\int_{B_1(0)}\partial_i \Gamma(x-y)\left(f(y)-f(x)\right)dy +f(x)\int_{\partial B_1(0)} \Gamma(x-y) \nu_i dS_y \bigg|\\
\leq &\bigg|\int_{B_1(0)}\partial_i \Gamma(x-y)\left(f(y)-f(x)\right)dy\bigg| +\bigg|f(x)\int_{\partial B_1(0)} \Gamma(x-y) \nu_i dS_y \bigg|\\
\leq & C\|f\|_{L^\infty(B_1(0))}\int_{B_1(0)}\frac{1}{|x-y|^{n-2s+1}}dy +C\|f\|_{L^\infty(B_1(0))}\\
\leq &C \|f\|_{L^\infty(B_1(0))},
\end{aligned}
\end{eqnarray*}
which implies that
\begin{eqnarray}\label{eq2.5-3}
\|Dw\|_{L^\infty(B_{1/2}(0))}\leq  C \|f\|_{L^\infty(B_1(0))},\,\,  \mbox{if}\,\, \frac{1}{2}<s<1.
\end{eqnarray}

 If $\frac{1}{2}<s<1,$ by virtue of Lemma \ref{le3.1}, for any $x, \bar x \in B_{1/2}(0), $ it holds that
\begin{eqnarray*}
\begin{aligned}
&\partial_i w(x)-\partial_i w(\bar x)\\
=& \int_{B_1(0)} \partial_i \Gamma(x-y)\left(f(y)-f(x)\right) dy -\int_{B_1(0)}\partial_i\Gamma(\bar x-y)\left(f(y)-f(\bar x)\right) dy\\
&+f(x)\int_{\partial B_1(0)}\Gamma(x-y) \nu_i dS_y-f(\bar x)\int_{\partial B_1(0)}\Gamma(\bar x-y) \nu_i dS_y\\
:=& I_1+I_2,
\end{aligned}
\end{eqnarray*}
where
$$
I_1=\int_{B_1(0)} \partial_i \Gamma(x-y)\left(f(y)-f(x)\right) dy -\int_{B_1(0)}\partial_i\Gamma(\bar x-y)\left(f(y)-f(\bar x)\right) dy
$$
and
$$
I_2=f(x)\int_{\partial B_1(0)}\Gamma(x-y) \nu_i dS_y-f(\bar x)\int_{\partial B_1(0)}\Gamma(\bar x-y) \nu_i dS_y.
$$

For $I_1$,  one can  evaluate the integral on $B_\delta(\zeta)$ and
$B_1(0)\backslash B_\delta(\zeta)$ separately, where $\delta=|x-\bar x| $ and $\zeta=\frac{x+\bar x}{2}$.

For the integral $I_1$ on $B_\delta(\zeta),$ one has by direct estimates that
\begin{eqnarray}\label{eq2.17}
\begin{aligned}
&\left|\int_{B_\delta(\zeta)}\partial_i \Gamma(x-y) \left(f(y)-f(x)\right)dy\right|+\left|\int_{B_\delta(\zeta)}\partial_i \Gamma(x-y) \left(f(y)-f(\bar x)\right)dy\right|\\
\leq &  C \|f\|_{L^\infty(B_1(0))}\int_{B_\delta(\zeta)} \frac{1}{|x-y|^{n-2s+1}}dy\\
\leq &C  \|f\|_{L^\infty(B_1(0))}|x-\bar x|^{2s-1}.
\end{aligned}
\end{eqnarray}

The integral $I_1$ on $B_1(0)\backslash B_\delta(\zeta)$ can be estimated as follows
\begin{eqnarray}\label{eq2.18}
\begin{aligned}
&\int_{B_1(0) \backslash B_\delta(\zeta)}\partial_i \Gamma(x-y) \left(f(y)-f(x)\right)dy-\int_{B_1(0) \backslash B_\delta(\zeta)}\partial_i \Gamma(\bar x-y) \left(f(y)-f(\bar x)\right)dy\\
= & \int_{B_1(0) \backslash B_\delta(\zeta)}\left(\partial_i \Gamma(x-y)-\partial_i \Gamma(\bar x-y) \right)\left(f(y)-f(x)\right)dy\\
&+\int_{B_1(0) \backslash B_\delta(\zeta)}\partial_i \Gamma(\bar x-y) \left(f(\bar x)-f(x)\right)dy\\
:=&I_{11}+I_{12}.
\end{aligned}
\end{eqnarray}
The mean value theorem and the fact that  $f\in L^\infty( B_1(0))$ give
\begin{eqnarray}\label{eq2.19}
\begin{aligned}
|I_{11}|&=\bigg|\int_{B_1(0) \backslash B_\delta(\zeta)}\left(\partial_i \Gamma(x-y)-\partial_i \Gamma(\bar x-y) \right)\left(f(y)-f(x)\right)dy\bigg|\\
&\leq C\|f\|_{L^\infty(B_1(0))} \int_{B_1(0) \backslash B_\delta(\zeta)}\frac{|x-\bar x|}{|\xi-y|^{n-2s+2}}dy\\
&\leq C|f\|_{L^\infty(B_1(0))}    |x-\bar x|\int_{B_1(0) \backslash B_\delta(\zeta)}\frac{1}{|\zeta-y|^{n-2s+2}}dy\\
&\leq C|f\|_{L^\infty(B_1(0))} |x-\bar x|^{2s-1},
\end{aligned}
\end{eqnarray}
where $\xi$ is a point lying between $x$ and $\bar x$, and  one has used the fact that
$$
|x-y|\leq |x-\zeta|+|\zeta-y|\leq 2 |\zeta-y|, y\in B_1(0) \backslash B_\delta (\zeta),
$$
and
$$
|\xi-y|\geq |y-\zeta|-|\zeta -\xi|\geq |y-\zeta|-\frac{|y-\zeta |}{2}=\frac{|y-\zeta |}{2},\,\, y \in B_1(0)\backslash B_\delta(\zeta).
$$

  For $I_{12}$, one has by the divergence theorem that
 \begin{eqnarray*}
 I_{12}=\left(f(\bar x)-f(x)\right)\int_{\partial B_1(0)\cup \partial B_\delta(\zeta)} \Gamma (\bar x-y)\nu_i dS_y,
 \end{eqnarray*}
 which together with $I_2$ gives
 \begin{eqnarray}\label{eq2.20}
\begin{aligned}
& |I_{12}+I_2|\\
=&\bigg|\left(f(\bar x)-f(x)\right)\int_{\partial B_1(0)\cup \partial B_\delta(\zeta)} \Gamma (\bar x-y)\nu_i dS_y\\
&+f(x)\int_{\partial B_1(0)}\Gamma(x-y) \nu_i dS_y-f(\bar x)\int_{\partial B_1(0)}\Gamma(\bar x-y) \nu_i dS_y\bigg|\\
=&\bigg|\left(f(\bar x)-f(x)\right)\int_{\partial B_\delta(\zeta)} \Gamma (\bar x-y)\nu_i dS_y\\
&+f(x)\int_{\partial B_1(0)}\left(\Gamma(x-y)-\Gamma(\bar x-y)\right) \nu_i dS_y\bigg|\\
\leq & C[f]_{L^\infty(B_1(0))}|x-\bar x|^{2s}+|f(x)|\int_{\partial B_1(0)}\frac{|x-\bar x|}{|\xi-y|^{n-2s+1}}dS_y\\
\leq & C\|f\|_{L^\infty (B_1(0))}|x-\bar x|^{2s-1},
\end{aligned}
\end{eqnarray}
where $\xi$ lies between $x$ and $\bar x.$

Thus  \eqref{eq2.16}  follows from  \eqref{eq2.17}--\eqref{eq2.20}.

 By virtue of Lemma \ref{le2.2}, \eqref{eq2.5-2}, \eqref{eq2.5-3},
 \eqref{eq2.10}, \eqref{eq2.11} and \eqref{eq2.16}, we obtain \eqref{eq1.2} and \eqref{eq1.3}.
 This completes the proof of Theorem \ref{th1.1}.
\end{proof}

\section{Schauder regularity, Ln-Lipschitz regularity and \texorpdfstring{$C^{2s+\alpha}$}  ~ regularity}

In this section, we show the Schauder regularity and Ln-Lipschitz regularity for nonnegative solutions to \eqref{eq1.1}.
To this end, we need to
estimate $w(x):=\int_{B_1(0)} \Gamma (x, y) f(y)dy$ with $\Gamma(x, y):=\frac{C_{n, s}}{|x-y|^{n-2s}},\,\, n\geq 3.$

Assume that $f\in C^\alpha(\bar B_1(0)),$ which  may  be extended  to be $0$ outside of $B_1(0).$

\begin{lemma}\label{le3.2}
Let $w(x)$ be the potential of $f$ in $B_1(0).$ Then
\begin{eqnarray}\label{eq3.1}
\|w\|_{C^{[2s+\alpha], \{2s+\alpha\}}(B_{1/2}(0))}\leq C \|f\|_{C^\alpha(\bar B_1(0))},\,\, \mbox{ if }\,\, 2s+\alpha \notin \mathbb{N} ,
\end{eqnarray}
and
\begin{eqnarray}\label{eq3.2}
\|w\|_{C^{2s+\alpha-1, \ln L}(B_{1/2}(0))}\leq C \|f\|_{C^\alpha(\bar B_1(0))},\,\, \mbox{ if }\,\, 2s+\alpha \in \mathbb{N},
\end{eqnarray}
where $[2s+\alpha]$  and $\{2s+\alpha\}$ are the integer part and the fractional part of $2s+\alpha$ respectively.
\end{lemma}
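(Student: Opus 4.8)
The plan is to estimate $w(x)=\int_{B_1(0)}\Gamma(x,y)f(y)\,dy$ by the classical potential-theoretic technique familiar from the Newtonian case ($s=1$), carefully tracking the exponent $2s-2$ that governs the decay of $\partial_i\partial_j\Gamma$ near the diagonal. The starting point is Lemma \ref{le3.1}, which (when $2s+\alpha>1$) gives the first-order formula
$$
\partial_i w(x)=\int_{B_1(0)}\partial_i\Gamma(x-y)\bigl(f(x)-f(y)\bigr)\,dy+f(x)\int_{\partial B_1(0)}\Gamma(x-y)\nu_i\,dS_y,
$$
together with the bound $\|Dw\|_{L^\infty(B_{1/2})}\le C\|f\|_{C^\alpha}$ already contained in the proof of Theorem \ref{th1.1}. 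From here the argument bifurcates according to the size of $2s$. If $2s<1$ one stays at the level of $w$ itself and reruns the computation \eqref{eq2.6}--\eqref{eq2.9} of the previous section, but with the constant $\|f\|_{L^\infty}$ replaced by $\|f\|_{C^\alpha}$: the $C^\alpha$-modulus of $f$ lets one insert a factor $|x-y|^\alpha$ (or $|\bar x-y|^\alpha$) in the integrand on $B_\delta(\zeta)$, which upgrades the local contribution from $\delta^{2s}$ to $\delta^{2s+\alpha}$ when $2s+\alpha<1$, and splitting off the constant $f(x)$ on the far region handles the mean-value part. If $2s\ge 1$, one differentiates once via Lemma \ref{le3.1} and must estimate the $C^{2s+\alpha-1}$ (or $C^{1,\ln L}$, or $C^{1}$) seminorm of $\partial_i w$; this is exactly the quantity already controlled in \eqref{eq2.16}--\eqref{eq2.20} when $f\in L^\infty$, and repeating that computation with the $C^\alpha$-bound on $f$ promotes the exponent by $\alpha$.

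Concretely, for $\partial_i w(x)-\partial_i w(\bar x)$ with $\delta=|x-\bar x|$, $\zeta=\frac{x+\bar x}{2}$, I would again split $B_1(0)=B_\delta(\zeta)\cup\bigl(B_1(0)\setminus B_\delta(\zeta)\bigr)$ exactly as in \eqref{eq2.18}. On $B_\delta(\zeta)$, using $|\partial_i\Gamma(x-y)|\le C|x-y|^{2s-1-n}$ and $|f(y)-f(x)|\le\|f\|_{C^\alpha}|x-y|^\alpha$, the contribution is $\le C\|f\|_{C^\alpha}\int_{B_{c\delta}(x)}|x-y|^{2s-1+\alpha-n}\,dy\le C\|f\|_{C^\alpha}\delta^{2s-1+\alpha}$ (when $2s-1+\alpha>0$; when it equals zero one gets the logarithm). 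On the outer region, the term $I_{11}$ with the difference $\partial_i\Gamma(x-y)-\partial_i\Gamma(\bar x-y)$ is bounded by the mean value theorem by $C\|f\|_{C^\alpha}\delta\int_{B_1\setminus B_\delta(\zeta)}|\zeta-y|^{2s-2+\alpha-n}\,dy$, which is $\le C\|f\|_{C^\alpha}\delta^{2s-1+\alpha}$ when $2s-2+\alpha<0$ and $\le C\delta|\ln\delta|$ when it is zero; the term $I_{12}$ together with the boundary term $I_2$ is handled by the divergence theorem exactly as in \eqref{eq2.20}, producing $C\|f\|_{C^\alpha}\delta^{2s-1+\alpha}$ (the boundary of $B_1$ stays at fixed distance from $B_{1/2}$, so that piece is harmless and in fact $C^\infty$ in $x$). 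Iterating: if $2s+\alpha>2$ one differentiates once more, writing $\partial_i\partial_j w$ via another integration-by-parts identity for $\partial_j$ of the formula above (the interior integral produces a second-order kernel plus a boundary term, and $\partial_j$ of $f(x)\int_{\partial B_1}\dots$ is classically smooth), and repeats the dichotomy for the $C^{\{2s+\alpha\}}$ (resp.\ $C^{0,\ln L}$) seminorm of the second derivatives. Since $2s+\alpha<3$ always, at most two differentiations are needed, so the recursion terminates immediately.

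The main obstacle, and the place requiring genuine care rather than routine bookkeeping, is the borderline case $2s+\alpha\in\mathbb{N}$, i.e.\ $2s+\alpha=1$ or $2s+\alpha=2$. Here the exponent $2s-1+\alpha$ or $2s-2+\alpha$ on which the above integrals rely hits exactly $0$, the power integrals become logarithmically divergent, and one must extract the precise $|x-\bar x|\,|\ln\min(|x-\bar x|,1/2)|$ modulus rather than a clean Hölder power — this is the content of \eqref{eq3.2}. The delicate point is to show the logarithm is both an upper bound (from $\int_\delta^c r^{-1}\,dr\sim|\ln\delta|$ on the annulus) and that the cross terms $I_{12}+I_2$ do not contribute anything worse; the divergence-theorem cancellation in \eqref{eq2.20} is what makes this work, since a naive estimate of $I_{12}$ alone would diverge. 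One also must double-check the endpoint when $2s+\alpha=2$ that differentiating $w$ twice is legitimate — this follows because $\partial_i w\in C^{1,\ln L}\subset C^{0,1}_{\rm loc}$ is then differentiable a.e.\ and the integral formula for $\partial_i\partial_j w$ can be justified by the same approximation argument (cutting out $B_\varepsilon(x)$ and letting $\varepsilon\to0$) referenced for Lemma \ref{le3.1}. Away from these two integer values the proof is a direct, if slightly tedious, transcription of the $s=1$ Schauder estimate in \cite{GT} with the kernel exponent shifted, and of the computations already carried out in Section 2.
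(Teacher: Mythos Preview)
Your proposal is essentially correct and tracks the paper's argument closely: the near/far split about $\zeta=(x+\bar x)/2$, the mean-value bound on the outer annulus, and the divergence-theorem cancellation in $I_{12}+I_2$ are exactly what the paper does for the integer cases $2s+\alpha\in\{1,2\}$ (for $2s+\alpha\notin\mathbb N$ the paper simply defers to \cite[Theorem 12.2.2]{ChenLiMa}).

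One organizational slip worth fixing: the dichotomy should be governed by $[2s+\alpha]$, not by whether $2s<1$ or $2s\ge 1$. Lemma~\ref{le3.1} is available as soon as $2s+\alpha>1$, so in the range $2s<1<2s+\alpha$ (e.g.\ $s=0.4$, $\alpha=0.5$) you must still differentiate once and run the $\partial_i w$ estimate; staying at the level of $w$ there would only produce a $C^{0,1}$ bound, not the required $C^{1,\{2s+\alpha\}}$. Also, for the undifferentiated case $2s+\alpha\le 1$ the paper inserts $(f(y)-f(x))$ globally via the symmetry
\[
\int_{B_2(\zeta)}\bigl(\Gamma(x-y)-\Gamma(\bar x-y)\bigr)\,dy=0
\]
(after extending $f$ by zero outside $B_1$), which is cleaner than your ``splitting off the constant $f(x)$ on the far region'' and sidesteps having to argue separately that $x\mapsto\int_{B_1}\Gamma(x-y)\,dy$ is $C^1$ on $B_{1/2}$ when $2s<1$.
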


\begin{proof}
If $2s+\alpha \notin \mathbb{N}, $  by a similar argument as Theorem 12.2.2  in \cite{ChenLiMa}, one can derive \eqref{eq3.1}.
Details of the proof are omitted here.

If $2s+\alpha \in \mathbb{N}$, to estimate $\|w\|_{C^{2s+\alpha-1, \ln L}(B_{1/2}(0))},$ we consider two
possible cases: $2s+\alpha =1$ and $2s+\alpha=2.$

Case i. $2s+\alpha =1.$

In this case, we show that
\begin{eqnarray}\label{eq3.3}
\|w\|_{C^{0, \ln L}(B_{1/2}(0))}\leq C \|f\|_{C^\alpha(\bar B_1(0))}.
\end{eqnarray}
By \eqref{eq2.5-2}, it suffices to show that for any $x,\, \bar x \in B_{1/2}(0),$
$$
|w(x)-w(\bar x)|\leq C [f]_{C^\alpha (\bar B_1(0))}|x-\bar x|\big|\ln \min(|x-\bar x|, 1/2)\big|,
$$
where $C$ is a constant independent of $x$ and $\bar x.$

Set $\zeta=\frac{x+\bar x}{2}.$ Then
$x, \bar x \in B_{1}(0)\subset B_2(\zeta).$

By symmetry,
$$
\int_{B_{2}(\zeta)}\left(\frac{1}{|x-y|^{n-2s}}-\frac{1}{|\bar x -y|^{n-2s}}\right)dy=0,
$$
it thus follows that
\begin{eqnarray}\label{eq3.4}
\begin{aligned}
&w(x)-w(\bar x)\\
=&C_{n, s}\int_{B_1(0)}\left(\frac{1}{|x-y|^{n-2s}}-\frac{1}{|\bar x-y|^{n-2s}}\right)f(y)dy\\
=&C_{n, s} \int_{B_2(\zeta)}\left(\frac{1}{|x-y|^{n-2s}}-\frac{1}{|\bar x-y|^{n-2s}}\right)f(y)dy\\
=&C_{n, s} \int_{B_2(\zeta)}\left(\frac{1}{|x-y|^{n-2s}}-\frac{1}{|\bar x-y|^{n-2s}}\right)(f(y)-f(x))dy,
\end{aligned}
\end{eqnarray}
due to  the fact $ \mbox{supp} f\subset B_1(0).$

Denote
$\delta=|x-\bar x|.$ We will estimate the  integrand  in \eqref{eq3.4}
on $B_\delta(\zeta)$ and $B_2(\zeta)\backslash B_\delta(\zeta)$
respectively.

By virtue of $f\in C^\alpha(\bar B_1(0))$, $B_\delta(\zeta)\subset B_{\frac{3}{2}\delta}(x)$ or $B_\delta(\zeta)\subset B_{\frac{3}{2}\delta}(\bar x)$, one has
\begin{eqnarray*}
\begin{aligned}
&\bigg|\int_{B_\delta(\zeta)}\frac{1}{|x-y|^{n-2s}}(f(y)-f(x))dy\bigg|\\
\leq& C [f]_{C^\alpha(\bar B_1(0))}\int_{B_\delta(\zeta)}\frac{1}{|x-y|^{n-1}}dy\\
\leq& C [f]_{C^\alpha(\bar B_1(0))}\int_{B_{\frac{3}{2}\delta}(x)}\frac{1}{|x-y|^{n-1}}dy\\
\leq & C [f]_{C^\alpha(\bar B_1(0))} |x-\bar x|.
\end{aligned}
\end{eqnarray*}
and
\begin{eqnarray*}
\begin{aligned}
&\bigg|\int_{B_\delta(\zeta)}\frac{1}{|\bar x-y|^{n-2s}}(f(y)-f(x))dy\bigg|\\
\leq& C [f]_{C^\alpha(\bar B_1(0))}\int_{B_\delta(\zeta)}\frac{|x-y|^\alpha}{|\bar x-y|^{n-2s}}dy\\
\leq& C [f]_{C^\alpha(\bar B_1(0))}\int_{B_\delta(\zeta)}\frac{|x-\bar x|^\alpha +|\bar x-y|^\alpha}{|\bar x-y|^{n-2s}}dy\\
\leq& C [f]_{C^\alpha(\bar B_1(0))}\left(|x-\bar x|^\alpha\int_{B_{\frac{3}{2}\delta}(\bar x)}\frac{1}{|\bar x-y|^{n-2s}}dy
+\int_{B_{\frac{3}{2}\delta}(\bar x)}\frac{1}{|\bar x-y|^{n-1}}dy\right)\\
\leq & C [f]_{C^\alpha(\bar B_1(0))} |x-\bar x|.
\end{aligned}
\end{eqnarray*}

Hence
\begin{eqnarray}\label{eq3.5}
\begin{aligned}
\left|\int_{B_\delta(\zeta)}\left(\frac{1}{|x-y|^{n-2s}}-\frac{1}{|\bar x-y|^{n-2s}}\right)(f(y)-f(x))dy\right|
\leq  C [f]_{C^\alpha(\bar B_1(0))} |x-\bar x|.
\end{aligned}
\end{eqnarray}
For any   $y \in B_2(\zeta)\backslash B_\delta(\zeta),$ since
$$
|x-y|\leq |x-\zeta|+|\zeta-y|\leq 2 |\zeta-y|.
$$
Then the mean value
theorem implies that
\begin{eqnarray}\label{eq3.6}
\begin{aligned}
&\left|\int_{B_2(\zeta)\backslash B_\delta(\zeta)}\left(\frac{1}{|x-y|^{n-2s}}-\frac{1}{|\bar x-y|^{n-2s}}\right)(f(y)-f(x))dy\right|\\
\leq &C \int_{B_2(\zeta)\backslash B_\delta(\zeta)} \frac{|x-\bar x|}{|\xi-y|^{n-2s+1}}[f]_{C^\alpha(B_1(0))}|x-y|^\alpha dy\\
\leq & C [f]_{C^\alpha(\bar B_1(0))} |x-\bar x| \int_{B_2(\zeta)\backslash B_\delta(\zeta)}\frac{(2 |\zeta-y|)^\alpha}{|\xi-y|^{n-2s+1}}dy\\
\leq & C [f]_{C^\alpha(\bar B_1(0))} |x-\bar x|\int_{B_2(\zeta)\backslash B_\delta(\zeta)}\frac{1}{|\zeta-y|^{n}}dy\\
\leq & C [f]_{C^\alpha (\bar B_1(0))}|x-\bar x|\big|\ln \min(|x-\bar x|, 1/2)\big|,
\end{aligned}
\end{eqnarray}
where $\xi$ is a point lying between $x$ and $\bar x$, and one has used the fact that
$$
|\xi-y|\geq |y-\zeta|-|\zeta -\xi|\geq |y-\zeta|-\frac{|y-\zeta |}{2}=\frac{|y-\zeta |}{2},\,\, y \in B_2(\zeta)\backslash B_\delta(\zeta).
$$
Collecting estimates  \eqref{eq3.4}--\eqref{eq3.6} yields
$$
|w(x)-w(\bar x)|\leq C [f]_{C^\alpha (\bar B_1(0))}|x-\bar x|\big|\ln \min(|x-\bar x|, 1/2)\big|,
$$
which shows that
\begin{eqnarray*}
\|w\|_{C^{0, \ln L}(B_{1/2}(0))}\leq C \|f\|_{C^\alpha(\bar B_1(0))}.
\end{eqnarray*}

Case ii. $2s+\alpha =2.$

In this case, we will estimate $\|\partial_i w(x)\|_{L^\infty(B_{1/2}(0))}$ and
$[w]_{C^{1, \ln L}(B_{1/2}(0))}$ to obtain
\begin{eqnarray}\label{eq3.6-1}
\|w\|_{C^{1, \ln L}(B_{1/2}(0))}\leq C\|f\|_{C^\alpha (\bar B_1(0))}|x-\bar x| \big|\ln \min(|x-\bar x|, 1/2)\big|.
\end{eqnarray}

By Lemma \ref{le3.1}, for any $x \in B_{1/2}(0),$ it holds that
\begin{eqnarray}\label{eq3.7}
\begin{aligned}
& |\partial_i w(x)|\\
=&\bigg|\int_{B_1(0)}\partial_i \Gamma(x-y)\left(f(y)-f(x)\right)dy +f(x)\int_{\partial B_1(0)} \Gamma(x-y) \nu_i dS_y \bigg|\\
\leq &\bigg|\int_{B_1(0)}\partial_i \Gamma(x-y)\left(f(y)-f(x)\right)dy\bigg| +\bigg|f(x)\int_{\partial B_1(0)} \Gamma(x-y) \nu_i dS_y \bigg|\\
\leq & C [f]_{C^\alpha(\bar B_1(0))}\int_{B_1(0)}\frac{1}{|x-y|^{n-1}}dy +C\|f\|_{L^\infty(B_1(0))}\\
\leq &C \|f\|_{C^\alpha(\bar B_1(0))}.
\end{aligned}
\end{eqnarray}

Next we estimate $[w]_{C^{1, \ln L}(B_{1/2}(0))}$ by  showing that
$$
|\partial_i w(x)-\partial_i w(\bar x)|\leq C\|f\|_{C^\alpha(\bar B_1(0))}|x-\bar x| \big|\ln \min(|x-\bar x|, 1/2)\big|, \,\,
\forall \, x, \bar x \in B_{1/2}(0).
$$

By virtue of Lemma \ref{le3.1}, for any $x, \bar x \in B_{1/2}(0), $ one has
\begin{eqnarray*}
\begin{aligned}
\partial_i w(x)-\partial_i w(\bar x)
:= I_1+I_2,
\end{aligned}
\end{eqnarray*}
where
$$
I_1=\int_{B_1(0)} \partial_i \Gamma(x-y)\left(f(y)-f(x)\right) dy -\int_{B_1(0)}\partial_i\Gamma(\bar x-y)\left(f(y)-f(\bar x)\right) dy
$$
and
$$
I_2=f(x)\int_{\partial B_1(0)}\Gamma(x-y) \nu_i dS_y-f(\bar x)\int_{\partial B_1(0)}\Gamma(\bar x-y) \nu_i dS_y.
$$

It can be estimated in a similar way as for the Case i by evaluating the integrands  in
 $B_\delta(\zeta)$ and
$B_1(0)\backslash B_\delta(\zeta)$ separately, where $\delta=|x-\bar x| $ and $\zeta=\frac{x+\bar x}{2}$.

On $B_\delta(\zeta)$,  note that
\begin{eqnarray*}
\begin{aligned}
&\left|\int_{B_\delta(\zeta)}\partial_i \Gamma(x-y) \left(f(y)-f(x)\right)dy\right|\\
\leq &  C [f]_{C^\alpha(\bar B_1(0))}\int_{B_\delta(\zeta)} \frac{1}{|x-y|^{n-1}}dy\\
\leq &C [f]_{C^\alpha(\bar B_1(0))}|x-\bar x|,
\end{aligned}
\end{eqnarray*}
and
\begin{eqnarray*}
\left|\int_{B_\delta(\zeta)}\partial_i \Gamma(\bar x-y) \left(f(y)-f(\bar x)\right)dy\right|\leq C [f]_{C^\alpha(\bar B_1(0))}|x-\bar x|.
\end{eqnarray*}
Then
\begin{eqnarray}\label{eq3.8}
\begin{aligned}
&\left|\int_{B_\delta(\zeta)}\partial_i \Gamma(x-y) \left(f(y)-f(x)\right)dy
-\int_{B_\delta(\zeta)}\partial_i \Gamma(\bar x-y) \left(f(y)-f(\bar x)\right)dy\right|\\
\leq &C [f]_{C^\alpha(\bar B_1(0))}|x-\bar x|.
\end{aligned}
\end{eqnarray}

On $B_1(0)\backslash B_\delta(\zeta)$, one has
\begin{eqnarray}\label{eq3.9}
\begin{aligned}
&\int_{B_1(0) \backslash B_\delta(\zeta)}\partial_i \Gamma(x-y) \left(f(y)-f(x)\right)dy-\int_{B_1(0) \backslash B_\delta(\zeta)}\partial_i \Gamma(\bar x-y) \left(f(y)-f(\bar x)\right)dy\\
= & \int_{B_1(0) \backslash B_\delta(\zeta)}\left(\partial_i \Gamma(x-y)-\partial_i \Gamma(\bar x-y) \right)\left(f(y)-f(x)\right)dy\\
&+\int_{B_1(0) \backslash B_\delta(\zeta)}\partial_i \Gamma(\bar x-y) \left(f(\bar x)-f(x)\right)dy\\
:=&I_{11}+I_{12}.
\end{aligned}
\end{eqnarray}
By the mean value theorem and the fact that $f\in C^\alpha(\bar B_1(0)),$ one can get
\begin{eqnarray}\label{eq3.10}
\begin{aligned}
|I_{11}|&=\bigg|\int_{B_1(0) \backslash B_\delta(\zeta)}\left(\partial_i \Gamma(x-y)-\partial_i \Gamma(\bar x-y) \right)\left(f(y)-f(x)\right)dy\bigg|\\
&\leq \int_{B_1(0) \backslash B_\delta(\zeta)}\frac{|x-\bar x|}{|\xi-y|^{n-2s+2}}[f]_{C^\alpha(B_1(0))}|x-y|^\alpha dy\\
&\leq C [f]_{C^\alpha(\bar B_1(0))}|x-\bar x|\int_{B_1(0) \backslash B_\delta(\zeta)}\frac{|\zeta-y|^{\alpha}}{|\zeta-y|^{n-2s+2}}dy\\
&\leq C [f]_{C^\alpha(\bar B_1(0))}|x-\bar x|\big|\ln \min(|x-\bar x|, 1/2)\big|,
\end{aligned}
\end{eqnarray}
where $\xi$ is a point lying between $x$ and $\bar x$, and one has used the fact that
$$
|x-y|\leq |x-\zeta|+|\zeta-y|\leq 2 |\zeta-y|, y\in B_1(0) \backslash B_\delta (\zeta),
$$
and
$$
|\xi-y|\geq |y-\zeta|-|\zeta -\xi|\geq |y-\zeta|-\frac{|y-\zeta |}{2}=\frac{|y-\zeta |}{2},\,\, y \in B_1(0)\backslash B_\delta(\zeta).
$$

 For the integral $I_{12}$, the divergence theorem yields
 \begin{eqnarray*}
 I_{12}=\left(f(\bar x)-f(x)\right)\int_{\partial B_1(0)\cup \partial B_\delta(\zeta)} \Gamma (\bar x-y)\nu_i dS_y,
 \end{eqnarray*}
 which, combined with  $I_2,$ gives
 \begin{eqnarray}\label{eq3.11}
\begin{aligned}
& |I_{12}+I_2|\\
=&\bigg|\left(f(\bar x)-f(x)\right)\int_{\partial B_1(0)\cup \partial B_\delta(\zeta)} \Gamma (\bar x-y)\nu_i dS_y\\
&+f(x)\int_{\partial B_1(0)}\Gamma(x-y) \nu_i dS_y-f(\bar x)\int_{\partial B_1(0)}\Gamma(\bar x-y) \nu_i dS_y\bigg|\\
=&\bigg|\left(f(\bar x)-f(x)\right)\int_{\partial B_\delta(\zeta)} \Gamma (\bar x-y)\nu_i dS_y\\
&+f(x)\int_{\partial B_1(0)}\left(\Gamma(x-y)-\Gamma(\bar x-y)\right) \nu_i dS_y\bigg|\\
\leq & C[f]_{C^\alpha (\bar B_1(0))}|x-\bar x|+|f(x)|\int_{\partial B_1(0)}\frac{|x-\bar x|}{|\xi-y|^{n-2s+1}}dS_y\\
\leq & C\|f\|_{C^\alpha (\bar B_1(0))}|x-\bar x|,
\end{aligned}
\end{eqnarray}
where $\xi$ lies between $x$ and $\bar x.$

It follows from  \eqref{eq3.7}--\eqref{eq3.11} that
\begin{eqnarray*}
\|w\|_{C^{1, \ln L}(B_{1/2}(0))}\leq C\|f\|_{C^\alpha (\bar B_1(0))}|x-\bar x| \big|\ln \min(|x-\bar x|, 1/2)\big|.
\end{eqnarray*}

Combining the two cases leads to  \eqref{eq3.2} by \eqref{eq3.3} and \eqref{eq3.6-1}.
This completes the proof of Lemma \ref{le3.2}.
 \end{proof}

\begin{proof}[Proof of Theorem \ref{th1.2}.]
To show the Schauder regularity for nonnegative solutions to the fractional Poisson equation  \eqref{eq1.1}  in the case $f \in C^\alpha(\bar B_1(0)) $ and $2s+\alpha \notin \mathbb{N},$ one needs only to combine the estimate for $w(x)$ in \eqref{eq3.1} and
with the estimate for $h(x)$ in Lemma \ref{le2.2}.

Then if  $2s+\alpha \notin \mathbb{N},$ it holds that
\begin{eqnarray*}
\begin{aligned}
&\|u\|_{C^{[2s+\alpha], \{2s+\alpha\}}(B_{1/2}(0))}\\
\leq &\|w\|_{C^{[2s+\alpha], \{2s+\alpha\}}(B_{1/2}(0))}+\|h\|_{C^{[2s+\alpha], \{2s+\alpha\}}(B_{1/2}(0))}\\
\leq &\|w\|_{C^{[2s+\alpha], \{2s+\alpha\}}(B_{1/2}(0))}+\|h\|_{C^{[2s+\alpha], \{2s+\alpha\}}(B_{1/2}(0))}\\
\leq & C\left(\|f\|_{C^\alpha(\bar B_1(0))}+\|u\|_{L^\infty(B_1(0))}\right),
\end{aligned}
\end{eqnarray*}
which shows \eqref{eq1.5}.

For the Ln-Lipschitz  regularity of nonnegative solutions to the fractional Poisson equation  \eqref{eq1.1}  in the case $f \in C^\alpha(\bar B_1(0)) $ and $2s+\alpha \in \mathbb{N},$ it suffices to combine the estimate for $w(x)$ in \eqref{eq3.2} and
with the estimate for $h(x)$ in Lemma \ref{le2.2}.

Therefore, if  $2s+\alpha \in \mathbb{N}$,  it follows from
\eqref{eq3.2} and Lemma \ref{le2.2} that
\begin{eqnarray*}
\begin{aligned}
&\|u\|_{C^{2s+\alpha-1, \ln L}(B_{1/2}(0))}\\
\leq &\|w\|_{C^{2s+\alpha-1, \ln L}(B_{1/2}(0))}+\|h\|_{C^{2s+\alpha-1, \ln L}(B_{1/2}(0))}\\
\leq &\|w\|_{C^{2s+\alpha-1, \ln L}(B_{1/2}(0))}+\|D^{2s+\alpha} h\|_{L^\infty(B_{1/2}(0))}\\
\leq & C\left(\|f\|_{C^\alpha(\bar B_1(0))}+\|u\|_{L^\infty(B_1(0))}\right).
\end{aligned}
\end{eqnarray*}
Therefore, \eqref{eq1.6} holds.
This completes the proof of Theorem \ref{th1.2}.
\end{proof}

\begin{proof}[Proof of Theorem \ref{th1.3}.]

If $2s+\alpha \in \mathbb{N} $  and $f \in C^{\alpha, Dini}(\bar B_1(0)),$ we can show the $C^{2s+\alpha}$ regularity for nonnegative solutions to the fractional Poisson equation \eqref{eq1.1}
by considering two cases: $2s+\alpha =1$ and $2s+\alpha =2.$

Case 1. $2s+\alpha =1.$

Then we claim that
$$
\|u\|_{C^{1}(B_{1/2}(0))}
\leq  C\left(\|f\|_{C^{\alpha, Dini}(\bar B_1(0))}+\|u\|_{L^\infty(B_1(0))}\right).
$$
Indeed, by  Lemma \ref{le2.2}, it suffices to prove
\begin{eqnarray}\label{eq3.13}
\|w\|_{C^{1}(B_{1/2}(0))}
\leq  C\|f\|_{C^{\alpha, Dini}(\bar B_1(0))}.
\end{eqnarray}

Let $\zeta$ be the midpoint of $x$ and $\bar x$, then $x, \bar x \in
B_1(0)\subset B_2(\zeta).$

Denote $\delta=|x-\bar x|.$ It follows  from \eqref{eq3.4} that
\begin{eqnarray}\label{eq3.14}
\begin{aligned}
&	w(x)-w(\bar x)\\
=&C_{n, s} \int_{B_2(\zeta)}\left(\frac{1}{|x-y|^{n-2s}}-\frac{1}{|\bar x-y|^{n-2s}}\right)(f(y)-f(x))dy\\
=&C_{n, s} \int_{B_\delta(\zeta)}\left(\frac{1}{|x-y|^{n-2s}}-\frac{1}{|\bar x-y|^{n-2s}}\right)(f(y)-f(x))dy\\
&+C_{n, s}\int_{B_2(\zeta)\backslash B_\delta(\zeta)}\left(\frac{1}{|x-y|^{n-2s}}-\frac{1}{|\bar x-y|^{n-2s}}\right)(f(y)-f(x))dy
\\
:=&C_{n, s}(I_1+I_2).
\end{aligned}
	\end{eqnarray}
 $I_1$ and $I_2$ can be treated respectively as follows.
	
By virtue of $f\in C^{\alpha, Dini}(\bar B_1(0))$ and $2s+\alpha =1,$ one can get
\begin{eqnarray*}
\begin{aligned}
&\left|\int_{B_\delta(\zeta)}\frac{1}{|x-y|^{n-2s}}\left(f(y)-f(x)\right)dy\right|\\
\leq & \left|\int_{B_{3\delta/2}(x)}\frac{1}{|x-y|^{n-2s-\alpha}} \|f\|_{C^{\alpha} (\bar B_1(0))} dy \right|\\
\leq & C \delta\|f\|_{C^{\alpha} (\bar B_1(0))} \\
\leq & C |x-\bar x| \|f\|_{C^{\alpha, Dini} (\bar B_1(0))},
\end{aligned}
	\end{eqnarray*}
and	
\begin{eqnarray*}
\begin{aligned}
&\left|\int_{B_\delta(\zeta)}\frac{1}{|\bar x-y|^{n-2s}}\left(f(y)-f(x)\right)dy\right|\\
\leq &\|f\|_{C^{\alpha} (\bar B_1(0))} \int_{B_\delta(\zeta)}\frac{|x-y|^\alpha}{|\bar x-y|^{n-2s}} dy\\
\leq &\|f\|_{C^{\alpha} (\bar B_1(0))}
\int_{B_\delta(\zeta)}\frac{|x-\bar x|^\alpha+|\bar x-y|^\alpha}{|\bar x-y|^{n-2s}} dy
\\
\leq &\|f\|_{C^{\alpha} (\bar B_1(0))}
\left(|x-\bar x|^\alpha
\int_{B_{3\delta/2}(\bar x)}\frac{1}{|\bar x-y|^{n-2s}} dy+
\int_{B_{3\delta/2}(\bar x)}\frac{1}{|\bar x-y|^{n-1}} dy
\right)
\\
\leq & C |x-\bar x| \|f\|_{C^{\alpha} (\bar B_1(0))} \\
\leq & C |x-\bar x| \|f\|_{C^{\alpha, Dini} (\bar B_1(0))}.
\end{aligned}
	\end{eqnarray*}

Hence,
\begin{eqnarray}\label{eq3.15}
|I_1|\leq C |x-\bar x| \|f\|_{C^{\alpha, Dini} (\bar B_1(0))}.
\end{eqnarray}

For  $I_2,$ since
$$
|x-y|\leq |x-\zeta|+|\zeta -y|\leq 2|\zeta -y|,\,\, x\in B_{1/2}(0),\,\, y\in B_2(\zeta)\backslash B_\delta(\zeta),
$$
one can use the mean value theorem to derive
\begin{eqnarray}\label{eq3.16}
\begin{aligned}
|I_2|=&\left|\int_{B_2(\zeta)\backslash B_\delta(\zeta)}\left(\frac{1}{|x-y|^{n-2s}}-\frac{1}{|\bar x-y|^{n-2s}}\right)(f(y)-f(x))dy\right|\\
\leq &C \int_{B_2(\zeta)\backslash B_\delta(\zeta)} \frac{|x-\bar x|}{|\xi-y|^{n-2s+1}}\omega_f(|x-y|)dy\\
\leq & C |x-\bar x| \int_{B_2(\zeta)\backslash B_\delta(\zeta)}\frac{\omega_f(2 |\zeta-y|)}{|\xi-y|^{n-2s+1}}dy\\
\leq & C |x-\bar x| \int_{B_2(\zeta)\backslash B_\delta(\zeta)}\frac{\omega_f(2 |\zeta-y|)}{|\zeta-y|^{n-2s+1}}dy\\
\leq & C |x-\bar x|\int_\delta^2 \frac{\omega_f(2r)}{r^{1+\alpha}}dr\\
\leq & C |x-\bar x| \|f\|_{C^{\alpha, Dini}(\bar B_1(0))},
\end{aligned}
\end{eqnarray}
where $\xi$ is a point lying between $x$ and $\bar x$, and one has used the fact that
$$
|\xi-y|\geq |y-\zeta|-|\zeta -\xi|\geq |y-\zeta|-\frac{|y-\zeta |}{2}=\frac{|y-\zeta |}{2},\,\, y \in B_2(\zeta)\backslash B_\delta(\zeta).
$$
Collecting \eqref{eq3.14}--\eqref{eq3.16} yields \eqref{eq3.13}.

Case 2. $2s+\alpha =2.$

Then it holds that
$$
\|u\|_{C^{2}(B_{1/2}(0))}
\leq  C\left(\|f\|_{C^{\alpha, Dini}(\bar B_1(0))}+\|u\|_{L^\infty(B_1(0))}\right),
$$
which follows from Lemma \ref{le2.2} and the following estimate
\begin{eqnarray}\label{eq3.17}
\|w\|_{C^{2}(B_{1/2}(0))}
\leq  C\|f\|_{C^{\alpha, Dini}(\bar B_1(0))}.
\end{eqnarray}

To prove \eqref{eq3.17},
by Lemma \ref{le3.1}, for any $x \in B_{1/2}(0),$ one has
\begin{eqnarray}\label{eq3.18}
\begin{aligned}
& |\partial_i w(x)|\\
=&\bigg|\int_{B_1(0)}\partial_i \Gamma(x-y)\left(f(y)-f(x)\right)dy +f(x)\int_{\partial B_1(0)} \Gamma(x-y) \nu_i dS_y \bigg|\\
\leq &\bigg|\int_{B_1(0)}\partial_i \Gamma(x-y)\omega_f(|x-y|)dy\bigg| +\bigg|f(x)\int_{\partial B_1(0)} \Gamma(x-y) \nu_i dS_y \bigg|\\
\leq & C \int_{B_1(0)}\frac{\omega_f(|x-y|)}{|x-y|^{n-2s+1}}dy +C\|f\|_{L^\infty(B_1(0))}\\
\leq &C \int_0^1\frac{\omega_f(r)}{r^{1+\alpha}}dy +C\|f\|_{L^\infty(B_1(0))}\\
\leq & C \|f\|_{C^{\alpha, Dini}(\bar B_1(0))}.
\end{aligned}
\end{eqnarray}

Next, the second order derivatives of $w$ can be estimated as
\begin{eqnarray}\label{eq3.18-1}
|\partial_i w(x)-\partial_i w(\bar x)|\leq C|x-\bar x| \|f\|_{C^{\alpha, Dini}(\bar B_1(0))}, \,\,
\forall \, x, \bar x \in B_{1/2}(0).
\end{eqnarray}
To this end, by Lemma \ref{le3.1}, for any $x, \bar x \in B_{1/2}(0), $  one has
\begin{eqnarray*}
\begin{aligned}
\partial_i w(x)-\partial_i w(\bar x)
:= J_1+J_2,
\end{aligned}
\end{eqnarray*}
where
$$
J_1=\int_{B_1(0)} \partial_i \Gamma(x-y)\left(f(y)-f(x)\right) dy -\int_{B_1(0)}\partial_i\Gamma(\bar x-y)\left(f(y)-f(\bar x)\right) dy
$$
and
$$
J_2=f(x)\int_{\partial B_1(0)}\Gamma(x-y) \nu_i dS_y-f(\bar x)\int_{\partial B_1(0)}\Gamma(\bar x-y) \nu_i dS_y.
$$

Decompose  $J_1$ into integrals on $B_\delta(\zeta)$ and $B_1(0) \backslash B_\delta(\zeta)$
and estimate each one as follows. First, it holds
\begin{eqnarray}\label{eq3.19}
\begin{aligned}
&\left|\int_{B_\delta(\zeta)} \partial_i \Gamma(x-y)\left(f(y)-f(x)\right) dy \right|	\\
\leq &C \int_{B_{3\delta/2}(x)}\frac{1}{|x-y|^{n-2s+1}}\omega_f(|x-y|)dy\\
\leq & C |x-\bar x|\int_0^{3\delta/2} \frac{\omega_f(r)}{r^{1+\alpha}}dr\\
\leq &C |x-\bar x| \|f\|_{C^{\alpha, Dini}(\bar B_1(0))},
\end{aligned}
\end{eqnarray}
similarly, one has
\begin{eqnarray}\label{eq3.20}
	\left|\int_{B_\delta(\zeta)} \partial_i \Gamma(\bar x-y)\left(f(y)-f(\bar x)\right) dy \right|\leq C |x-\bar x| \|f\|_{C^{\alpha, Dini}(\bar B_1(0))}.
\end{eqnarray}
Next, note that
\begin{eqnarray}\label{eq3.21}
\begin{aligned}
&\int_{B_1(0) \backslash B_\delta(\zeta)}\partial_i \Gamma(x-y) \left(f(y)-f(x)\right)dy-\int_{B_1(0) \backslash B_\delta(\zeta)}\partial_i \Gamma(\bar x-y) \left(f(y)-f(\bar x)\right)dy\\
= & \int_{B_1(0) \backslash B_\delta(\zeta)}\left(\partial_i \Gamma(x-y)-\partial_i \Gamma(\bar x-y) \right)\left(f(y)-f(x)\right)dy\\
&+\int_{B_1(0) \backslash B_\delta(\zeta)}\partial_i \Gamma(\bar x-y) \left(f(\bar x)-f(x)\right)dy\\
:=&J_{11}+J_{12}.
\end{aligned}
\end{eqnarray}
It follows from the mean value theorem and $f\in C^{\alpha, Dini}(\bar B_1(0))$ that
\begin{eqnarray}\label{eq3.22}
\begin{aligned}
|J_{11}|&=\bigg|\int_{B_1(0) \backslash B_\delta(\zeta)}\left(\partial_i \Gamma(x-y)-\partial_i \Gamma(\bar x-y) \right)\left(f(y)-f(x)\right)dy\bigg|\\
&\leq \int_{B_1(0) \backslash B_\delta(\zeta)}\frac{|x-\bar x|}{|\xi-y|^{n-2s+2}}\omega_f(|x-y|) dy\\
&\leq C |x-\bar x|\int_{B_1(0) \backslash B_\delta(\zeta)}\frac{\omega_f(2|\zeta-y|)}{|\zeta-y|^{n-2s+2}}dy\\
&\leq C |x-\bar x|\int_\delta^2 \frac{\omega_f(2r)}{r^{1+\alpha}}dr\\
&\leq C|x-\bar x|\|f\|_{C^{\alpha, Dini}(\bar B_1(0))},
\end{aligned}
\end{eqnarray}
where $\xi$ is a point lying between $x$ and $\bar x$, and one has used the facts that
$$
|x-y|\leq |x-\zeta|+|\zeta-y|\leq 2 |\zeta-y|,\,\, y\in B_1(0) \backslash B_\delta (\zeta),
$$
and
$$
|\xi-y|\geq |y-\zeta|-|\zeta -\xi|\geq |y-\zeta|-\frac{|y-\zeta |}{2}=\frac{|y-\zeta |}{2},\,\, y \in B_1(0)\backslash B_\delta(\zeta).
$$

For the integral $J_{12}$,    the divergence theorem implies
 \begin{eqnarray*}
 J_{12}=\left(f(\bar x)-f(x)\right)\int_{\partial B_1(0)\cup \partial B_\delta(\zeta)} \Gamma (\bar x-y)\nu_i dS_y.
 \end{eqnarray*}
Since
 $$
 |f(x)-f(\bar x)|\leq C |x-\bar x| \|f\|_{C^\alpha(\bar B_1(0))}\leq C |x-\bar x| \|f\|_{C^{\alpha, Dini} (\bar B_1(0))},
 $$
 we can estimate $J_{12}$  together  $J_2$  to obtain
 \begin{eqnarray}\label{eq3.24}
\begin{aligned}
& |J_{12}+J_2|\\
=&\bigg|\left(f(\bar x)-f(x)\right)\int_{\partial B_1(0)\cup \partial B_\delta(\zeta)} \Gamma (\bar x-y)\nu_i dS_y\\
&+f(x)\int_{\partial B_1(0)}\Gamma(x-y) \nu_i dS_y-f(\bar x)\int_{\partial B_1(0)}\Gamma(\bar x-y) \nu_i  dS_y\bigg|\\
=&\bigg|\left(f(\bar x)-f(x)\right)\int_{\partial B_\delta(\zeta)} \Gamma (\bar x-y)\nu_i dS_y\\
&+f(x)\int_{\partial B_1(0)}\left(\Gamma(x-y)-\Gamma(\bar x-y)\right) \nu_i  dS_y\bigg|\\
\leq & C|x-\bar x|^\alpha \|f\|_{C^{\alpha}(\bar B_1(0))}
|x-\bar x|^{2s-1} +|f(x)|\int_{\partial B_1(0)}\frac{|x-\bar x|}{|\xi -y|^{n-2s+1}}dS_y\\
\leq & C|x-\bar x| \|f\|_{C^{\alpha, Dini}(\bar B_1(0))},
\end{aligned}
\end{eqnarray}
where $\xi$ lies between $x$ and $\bar x,$ and one has used the fact $2s+\alpha =2.$

Collecting \eqref{eq3.19}--\eqref{eq3.24} leads to \eqref{eq3.18-1},
which together with \eqref{eq3.18} gives \eqref{eq3.17}.

This completes the proof of Theorem \ref{th1.3}.
\end{proof}

\section{A priori estimates}

\begin{proof} [Proof of Theorem \ref{th1.4}.]

By the assumptions,
$u$ sloves the  semi-linear equation
$$
(-\Delta)^s u(x)=f(x, u(x)),\,\, x\in \Omega,
$$
and we will show the following a priori estimates
$$
u(x)\leq C (1+ dist^{-\frac{2s}{p-1}}(x, \partial \Omega)), \,\, x\in \Omega
$$
by a contradiction argument.

Assume otherwise. Then, there exist sequences of $\Omega_k$, $u_k$, $x_k\in \Omega_k,$ such that
$u_k$ solves
\begin{eqnarray}\label{eq4-1}
\begin{cases}
(-\Delta)^s u_k(x)=f(x, u_k(x)),& x\in \Omega_k,\\
u_k(x)=0,& x\in \mathbb{R}^n\backslash \Omega_k,
\end{cases}
\end{eqnarray}
and
\begin{eqnarray}\label{eq4-2}
\frac{1}{2}(u_k(x_k))^{\frac{p-1}{2s}} d_k\geq
\frac{1}{2}(u_k(x_k))^{\frac{p-1}{2s}} (1+d_k^{-1})^{-1}
 >k\to \infty\,\, \mbox{as} \,\, k\to \infty,
\end{eqnarray}
where $d_k=dist (x_k, \partial \Omega_k).$

Define
$$
r_k:=2ku_k^{-\frac{p-1}{2s}}(x_k).
$$
Then \eqref{eq4-2} implies that $r_k<d_k$ and thus $B_{r_k}(x_k)\subset \Omega_k.$

Define a function
$$
S_k(x)=u_k(x)(r_k-|x-x_k|)^{\frac{2s}{p-1}},\,\, x\in B_{r_k}(x_k).
$$
Then there exists $a_k \in B_{r_k}(x_k)$ such that
$$
S_k(a_k)=\mathop{\max}\limits_{B_{r_k}(x_k)} S_k(x).
$$
It follows that
\begin{eqnarray}\label{eq4-3}
u_k(x)\leq u_k(a_k)\frac{(r_k-|a_k-x_k|)^{\frac{2s}{p-1}}}{(r_k-|x-x_k|)^{\frac{2s}{p-1}}},\,\, x\in B_{r_k}(x_k).
\end{eqnarray}

Set
$$
\lambda_k=(u_k(a_k))^{-\frac{p-1}{2s}}.
$$
Taking $x=x_k$ in \eqref{eq4-3} yields
 \begin{eqnarray}\label{eq4-4}
\frac{u_k(x_k)}{u_k(a_k)}\leq \frac{(r_k-|a_k-x_k|)^\frac{2s}{p-1}}{r_k^{\frac{2s}{p-1}}},
\end{eqnarray}
which implies  that
$$
u_k(x_k)\leq u_k(a_k).
$$
Using the definition of $r_k$ and $\lambda_k$, one gets from \eqref{eq4-4} that
 \begin{eqnarray}\label{eq4-5}
2k\lambda_k\leq r_k-|a_k-x_k|.
\end{eqnarray}

In addition, one can get
\begin{eqnarray}\label{eq4-6}
r_k-|a_k-x_k|\leq 2(r_k-|x-x_k|),\,\, x\in B_{k \lambda_k}(a_k).
\end{eqnarray}
Indeed, for any $x\in B_{k\lambda_k}(a_k)$, one has
$$
|x-x_k|\leq |x-a_k|+|a_k-x_k|\leq k\lambda_k +r_k-2k\lambda_k \leq r_k
$$
due to \eqref{eq4-5}. It follows that  $x\in B_{r_k}(x_k)$
and thus
$B_{k\lambda_k}(a_k)\subset B_{r_k}(x_k).$ Now for any $x\in B_{k\lambda_k}(a_k)$, using \eqref{eq4-5}, one can get
\begin{eqnarray*}
\begin{aligned}
	2(r_k-|x-x_k|)&\geq 2(r_k-(|x_k-a_k|+|x-a_k|))\\
	&\geq 2(r_k -(|x_k-a_k|+k\lambda_k))\\
	&\geq r_k -|x_k-a_k|+2k\lambda_k-2k\lambda_k\\
	&\geq r_k -|x_k-a_k|,
\end{aligned}
\end{eqnarray*}
therefore, \eqref{eq4-6} follows.

The combination of  \eqref{eq4-3} with \eqref{eq4-6} yields
\begin{eqnarray}\label{eq4-7}
u_k(x)\leq u_k(a_k) 2^\frac{2s}{p-1},\,\, x\in B_{k\lambda_k}(a_k).
\end{eqnarray}

 We now rescale the solutions as
 $$
   v_k(x)=\frac{1}{u_k(a_k)}u_k(\lambda_kx+a_k), \,\, x\in B_k(0).
 $$

By \eqref{eq4-1},
it is easy to check that the function $v_k(x)$ solves
\begin{eqnarray}\label{eq4-8}
(-\Delta)^s v_k(x)=
\lambda_k^\frac{2sp}{p-1} f(\lambda_kx+a_k, \lambda_k^{-\frac{2s}{p-1}}v_k(x))
:=F_k(x, v_k(x)),\,\, x\in B_k(0).
\end{eqnarray}
Moreover, it follows from the definition of $v_k$ and \eqref{eq4-7} that
\begin{eqnarray}\label{eq4-9}
v_k(0)=1
\end{eqnarray}
and
\begin{eqnarray}\label{eq4-10}
v_k(x)\leq 2^\frac{2s}{p-1},\,\, x\in B_k(0).
\end{eqnarray}
Then by \eqref{eq4-10} and the assumptions (H1)-(H3),  one can obtain
\begin{eqnarray*}
\begin{aligned}
F_k(x, v_k(x))
=&\lambda_k^\frac{2sp}{p-1} f(\lambda_kx+a_k, \lambda_k^{-\frac{2s}{p-1}}v_k(x))\\
\leq &C_0\lambda_k^\frac{2sp}{p-1}\left(1+\lambda_k^{-\frac{2sp}{p-1}}v_k^p(x)\right)\\
\leq &C_0\lambda_k^\frac{2sp}{p-1}+C_0 v_k^p(x)\\
\leq &C_1.
\end{aligned}
\end{eqnarray*}
Applying Theorem \ref{th1.1} to  \eqref{eq4-8} yields
\begin{eqnarray*}
\|v_k\|_{C^{[2s-\varepsilon], \{2s-\varepsilon\}}(B_{3k/4}(0))}\leq C_2
\end{eqnarray*}
for all $0<\varepsilon<2s.$
It follows that $F_k(x, v_k(x))$ is H\"{o}lder continuous
in $B_{3k/4}(0).$ Then by virtue of Theorem \ref{th1.2}, we conclude that there exist some positive constants
$\beta \in (0, 1)$ and $C_3$ independent of $k$ such that
\begin{eqnarray}\label{eq4-11}
\|v_k\|_{C^{[2s+\beta], \{2s+\beta\}}(B_{k/2}(0))}\leq C_3.
\end{eqnarray}

 Due to the above uniform regularity estimates, it then follows from the  Ascoli-Arzel\'{a} theorem that there exists
 a converging subsequence of $\{v_k\}$ (stilled denoted by $\{v_k\}$) that converges pointwisely in
 $\mathbb{R}^n$ to a function $v(x).$
Taking limit in \eqref{eq4-9} yields
\begin{eqnarray}\label{eq4-12}
v(0)=1.	
\end{eqnarray}
Moreover, $v_k$ converges in $C_{loc}^{[2s+\epsilon], \{2s+\epsilon\}}(\mathbb{R}^n)$ and hence  $(-\Delta)^s v_k$ converges point-wisely in $\mathbb{R}^n$.

Applying a result in \cite[Theorem 1.1]{DJXY2023} we obtain
\begin{eqnarray}\label{eq4-13}
\lim_{k \to \infty} (-\Delta)^s v_k(x)= (-\Delta)^s v(x)-b, \forall x \in \mathbb{R}^n
\end{eqnarray}
with
$$
b=C_{n, s} \lim_{R\to \infty} \lim_{k\to \infty} \int_{B_R^c} \frac{v_k(x)}{|x|^{n+2s}} dx \geq 0.
$$

%

In addition, for any fixed $x\in \mathbb{R}^n,$ if $\{\lambda_k x+a_k\}$ is bounded, we assume that $\lambda_k x+a_k \to \bar x$  by extracting a further
subsequence. Therefore, the assumption (H3) implies
\begin{eqnarray}\label{eq4-14}
\begin{aligned}
&\lambda_k^{\frac{2sp}{p-1}} f(\lambda_k x+a_k, \lambda_k^{-\frac{2s}{p-1}} v_k(x))\\
= & (v_k(x))^p\frac{f(\lambda_k x+a_k, \lambda_k^{-\frac{2s}{p-1}} v_k(x))}{\left(\lambda_k^{-\frac{2s}{p-1}} v_k(x)\right)^p}\\
\to& K(\bar x) v^p(x)\,\, \mbox{ as }\,\, k\to \infty,
\end{aligned}
\end{eqnarray}
which is  still valid in the case $|\lambda_k x+a_k| \to \infty$ with $|\bar x|= \infty$ by (H3).

Consequently,  it follows from \eqref{eq4-13} and \eqref{eq4-14} that $v$ is a solution of
\begin{eqnarray}\label{eq4-15}
(-\Delta)^s v(x)=K(\bar x) v^p(x)+b,\,\, x\in \mathbb{R}^n
\end{eqnarray}
with $b\geq 0.$

To proceed with the proof, we show that $b$ equals to $0.$

Let $$F_R(x) = \left\{\begin{array}{ll} K(\bar{x})v^p(x) + b, & x \in B_R(0)\\
0, & x \in B_R^C(0).
\end{array}
\right.
$$
$$
v_R(x)=C_{n, s}\int_{\mathbb{R}^n}  \frac{F_R(y)}{|x-y|^{n-2s}} dy.
$$
Then it is easy to see that
$$
\left\{\begin{array}{ll} (-\Delta)^s v_R(x)= F_R(x), \;\;  x \in \mathbb{R}^n\\
\mathop{\lim}\limits_{|x| \to \infty} v_R(x)=0.
\end{array}
\right.
$$

Denote
$$
w_R(x)=v(x)-v_R(x), \,\, x\in \mathbb{R}^n.
$$
Then
\begin{eqnarray*}
\begin{cases}
(-\Delta)^s w_R(x)\geq 0,& x\in \mathbb{R}^n,\\
\mathop{\lim}\limits_{|x| \to \infty} w_R(x) \geq 0.
\end{cases}
\end{eqnarray*}
By the maximum principle, we have
$$
w_R(x) \geq 0,\,\, x\in \mathbb{R}^n.
$$

Now let $R \to \infty$, we arrive at
$$
v(x)\geq \int_{\mathbb{R}^n}\frac{C_{n, s}}{|x-y|^{n-2s}} \left(K(\bar x) v^p(y)+b \right)dy, \,\, x \in \mathbb{R}^n.
$$
If $b>0,$ then the integral on the right hand side diverges and
$v(x) =\infty.$ This contradicts the boundedness of $v$.
Therefore, we must have $b=0$ and
\begin{eqnarray*}
(-\Delta)^s v(x)=K(\bar x) v^p(x),\,\, x\in \mathbb{R}^n.
\end{eqnarray*}
By virtue of \eqref{eq4-12} and  the maximum principle,
one gets that
$$v(x)>0,\,\, \forall x \in \mathbb{R}^n.$$

On the other hand, by the Liouville theorem in \cite[Theorem 4]{CLL2017},
\eqref{eq4-15}  has no positive solution if $1<p<\frac{n+2s}{n-2s}.$ This is a contradiction. Therefore, \eqref{eq1.8} and \eqref{eq1.9}
are true. Therefore,  the proof of Theorem \ref{th1.4} is completed.
\end{proof}

\begin{proof}[Proof of Theorem \ref{th1.5}.]
In this case,
 $\Omega$ is an unbounded domain in $\mathbb{R}^n$, and
 $u$ solves the  quasi-linear equation
$$
(-\Delta)^s u(x)=b(x) |\nabla u|^q(x)+ f(x, u(x)),\,\, x\in \Omega
$$
for $s>\frac{1}{2}.$

 We show also that
\begin{eqnarray}\label{eq4-16}
u(x)+|\nabla u|^{\frac{2s}{2s+p-1}}(x)\leq C\left(1+dist^{-\frac{2s}{p-1}}(x, \partial \Omega)\right),\,\, x\in \Omega
\end{eqnarray}
by a contradiction argument.

If \eqref{eq4-16} is not valid, then there exist sequences of $\Omega_k \subset \mathbb{R}^n,$ $u_k, \, x_k \in \Omega_k$
such that $u_k$ solves
\begin{eqnarray*}
(-\Delta)^s u_k(x)=b(x)|\nabla u_k|^q(x)+f(x, u_k(x)),& x\in \Omega_k,\\
\end{eqnarray*}
and
\begin{eqnarray*}
\frac{1}{2}M_k(x_k)d_k
\geq
\frac{1}{2}M_k(x_k) (1+d_k^{-1})^{-1}
>k\to \infty\,\, \mbox{as} \,\, k\to \infty,
\end{eqnarray*}
where $d_k=dist (x_k, \partial \Omega_k)$ and
$$
M_k(x):=u_k^{\frac{p-1}{2s}}(x)+|\nabla u_k|^{\frac{p-1}{2s+p-1}}(x).
$$

Define
$$
r_k:=2k M_k^{-1}(x_k).
$$
It follows that $r_k<d_k$ and therefore $B_{r_k}(x_k)\subset \Omega_k.$

To start the rescaling procedure and  obtain upper bounds for solutions, we consider the following function
$$
 S_k(x)=\left(M_k(x)(r_k-|x-x_k|)\right)^\frac{2s}{p-1},\,\, x\in B_{r_k}(x_k).
$$
Then there exists a point $a_k \in B_{r_k}(x_k)$  such that
$$
S_k(a_k)=\mathop{\max}\limits_{B_{r_k}(x_k)} S_k(x).
$$
It follows that
\begin{eqnarray}\label{eq4-17}
M_k(x)\leq M_k(a_k)\frac{r_k-|a_k-x_k|}{r_k-|x-x_k|},\,\, x\in B_{r_k}(x_k).
\end{eqnarray}

Denote
$$
\lambda_k=(M_k(a_k))^{-1}.
$$
Taking $x=x_k$ in \eqref{eq4-17}  gives
 \begin{eqnarray}\label{eq4-18}
\frac{M_k(x_k)}{M_k(a_k)}\leq \frac{r_k-|a_k-x_k|}{r_k},
\end{eqnarray}
which implies that
$$
M_k(x_k)\leq M_k(a_k).
$$
It follows from  the definition of $r_k$, $\lambda_k$ and  \eqref{eq4-18} that
 \begin{eqnarray}\label{eq4-19}
2k\lambda_k\leq r_k-|a_k-x_k|.
\end{eqnarray}

In addition, using \eqref{eq4-19} and by a similar argument as in deriving \eqref{eq4-6}, one gets
\begin{eqnarray}\label{eq4-20}
r_k-|a_k-x_k|\leq 2(r_k-|x-x_k|),\,\, x\in B_{k \lambda_k}(a_k)\subset B_{r_k}(x_k).
\end{eqnarray}
Combining \eqref{eq4-17} with \eqref{eq4-20} yields
\begin{eqnarray}\label{eq4-21}
	M_k(x)\leq 2 M_k(a_k) ,  \,\, x\in B_{k\lambda_k}(a_k).
\end{eqnarray}

Rescale the solutions sequence as
$$
v_k(x)=\frac{1}{(M_k(a_k))^{\frac{2s}{p-1}}}u_k(\lambda_k x+a_k),\,\, x\in B_k(0).
$$
Then $v_k(x)$ satisfies the following equation
\begin{eqnarray}\label{eq4.2}
\begin{aligned}
&(-\Delta)^s v_k(x)\\
=&\lambda_k^{\frac{2sp-(2s+p-1)q}{p-1}} b(\lambda_k x+a_k)|\nabla v_k|^q(x)
+\lambda_k^{\frac{2sp}{p-1}} f(\lambda_k x+a_k, \lambda_k^{-\frac{2s}{p-1}} v_k(x))\\
:=&F_k(x, v_k(x), \nabla v_k(x)),\,\, x\in B_k(0).
\end{aligned}
\end{eqnarray}

Moreover, by the definition of $\lambda_k$ and \eqref{eq4-21}, one has
\begin{eqnarray}\label{eq4.3}
\left(v_k^{\frac{p-1}{2s}}+|\nabla v_k|^{\frac{p-1}{2s+p-1}}\right)(0)=\lambda_k M_k(a_k)=1
\end{eqnarray}
and
\begin{eqnarray}\label{eq4.4}
\left(v_k^{\frac{p-1}{2s}}+|\nabla v_k|^{\frac{p-1}{2s+p-1}}\right)(x)\leq 2,\,\, |x|\leq k.
\end{eqnarray}
It follows from \eqref{eq4.4} that $v_k$ and $|\nabla v_k|$ are uniformly bounded in $B_k(0)$.
Then  for $k$ large enough,  one can get
$$
\lambda_k^{\frac{2sp-(2s+p-1)q}{p-1}} b(\lambda_k x+a_k)|\nabla v_k|^q(x)\leq C_8,\,\, x\in B_k(0),
$$
and
\begin{eqnarray*}
\begin{aligned}
&\lambda_k^{\frac{2sp}{p-1}} f(\lambda_k x+a_k, \lambda_k^{-\frac{2s}{p-1}} v_k(x))\\
\leq &C_0 \lambda_k^{\frac{2sp}{p-1}}\left(1+\lambda_k^{-\frac{2sp}{p-1}}(v_k(x))^p\right)\\
\leq &C_0 \lambda_k^{\frac{2sp}{p-1}}+C_0(v_k(x))^p\\
\leq & C_9,
\end{aligned}
\end{eqnarray*}
where one has used the assumptions (H1)-(H3)  and the fact $0<q<\frac{2sp}{2s+p-1}.$
As a result, there exists a constant $C_{10}>0$ independent of $k$ such that
$$
0\leq F_k(x, v_k(x), \nabla v_k(x))\leq C_{10},\,\, x\in B_k(0).
$$
 It follows
from Theorem \ref{th1.1} that there exists a positive constant $C_{11}$ independent of $k$ such that
$$
\|v_k\|_{C^{[2s-\varepsilon], \{2s-\varepsilon\}}(B_{3k/4}(0))}\leq C_{11}
$$
for all $0<\varepsilon <2s.$
Therefore  $F_k(x, v_k(x), \nabla v_k(x))$ in \eqref{eq4.2} is H\"{o}lder continuous for $x\in B_{3k/4}(0)$  due to $s>\frac{1}{2}.$  By virtue of Theorem \ref{th1.2}, there exist some  positive constants $\beta \in (0, 1)$ and $C_{12}$ which are independent of $k$ such that
\begin{eqnarray}\label{eq4.5}
\|v_k\|_{C^{[2s+\beta], \{2s+\beta\}}(B_{k/2}(0))}\leq C_{12}.
\end{eqnarray}

Due to the above uniform regularity estimates, it follows from the Ascoli-Arzel\'{a} theorem  that there exists a converging subsequence of $\{v_k\}$ (still denoted by $\{v_k\}$) that converges pointwisely in $\mathbb{R}^n$ to a function
$v(x)$, and
$$
v_k \to v \,\, \mbox{in}\,\, C_{{loc}}^1(\mathbb{R}^n) \,\,  \mbox{as}\,\, k\to \infty.
$$
Taking limit in \eqref{eq4.3}, we obtain
\begin{eqnarray}\label{eq4.6}
v^{\frac{p-1}{2s}}(0)+|\nabla v|^{\frac{p-1}{2s+p-1}}(0)= 1.
\end{eqnarray}
\eqref{eq4.5} and a similar argument as deriving \eqref{eq4-13} leads to
\begin{eqnarray}\label{eq4.7}
\mathop{\lim}\limits_{k\to \infty} (-\Delta)^s v_k(x)=(-\Delta)^s v(x) -  b,\,\, x\in \mathbb{R}^n
\end{eqnarray}
for some $b \geq 0$.

To derive a contradiction, one needs to analyze the limit equation satisfied by $v$.

Since $0<q<\frac{2sp}{2s+p-1}$, $v_k$ and $|\nabla v_k|$ are uniformly bounded in $B_k(0),$ then
$$
\lambda_k^{\frac{2sp-(2s+p-1)q}{p-1}} b(\lambda_k x+a_k)|\nabla v_k|^q(x)
\to 0\,\, \mbox{ as }\,\, k\to \infty,
$$

In addition, for any fixed $x\in \mathbb{R}^n,$ if $\{\lambda_k x+a_k\}$ is bounded, we assume that $\lambda_k x+a_k \to \bar x$  by extracting a further
subsequence. Therefore, the assumption (H3) implies
\begin{eqnarray}\label{eq4.8}
\begin{aligned}
&\lambda_k^{\frac{2sp}{p-1}} f(\lambda_k x+a_k, \lambda_k^{-\frac{2s}{p-1}} v_k(x))\\
= & (v_k(x))^p\frac{f(\lambda_k x+a_k, \lambda_k^{-\frac{2s}{p-1}} v_k(x))}{\left(\lambda_k^{-\frac{2s}{p-1}} v_k(x)\right)^p}\\
\to& K(\bar x) v^p(x)\,\, \mbox{ as }\,\, k\to \infty,
\end{aligned}
\end{eqnarray}
which is  still valid in the case $|\lambda_k x+a_k| \to \infty$ with $|\bar x|= \infty$ by (H3).

Consequently, combining \eqref{eq4.7} and \eqref{eq4.8} shows that $v$ is a solution of
\begin{eqnarray*}
(-\Delta)^s v(x)=K(\bar x) v^p(x) +  b,\,\, x\in \mathbb{R}^n.
\end{eqnarray*}
 Similar to the previous argument, one can show that $b=0$, and hence
\begin{eqnarray}\label{eq4.9}
(-\Delta)^s v(x)=K(\bar x) v^p(x),\,\, x\in \mathbb{R}^n.
\end{eqnarray}

By virtue of \eqref{eq4.6} and  the maximum principle,
one can deduce that
$$v(x)>0,\,\, \forall x \in \mathbb{R}^n.$$

On the other hand, by the Liouville theorem in \cite[Theorem 4]{CLL2017},
\eqref{eq4.9}  has no positive solution. This is a contradiction. Therefore, \eqref{eq4-16}
holds and thus  the proof of Theorem \ref{th1.5} is completed.
\end{proof}

{\bf{Acknowledgements.}} This research is supported in part by the Zhang Ge Ru Foundation, Hong Kong RGC Earmarked Research Grants CUHK-14301421, CUHK-14300917, CUHK-14302819 and CUHK-14300819. Z. Xin is also supported by the key
project of NSFC  (Grants No. 12131010 and 11931013). C. Li is partially supported by NSFC (Grant No. 12031012).  W. Chen is
 partially supported by MPS Simons Foundation 847690 and NSFC (Grant No. 12071229). L. Wu is  partially supported by NSFC (Grant No. 12401133).
 \medskip

{\bf{Date availability statement:}} Data will be made available on reasonable request.
\medskip

{\bf{Conflict of interest statement:}} There is no conflict of interest.

\end{document}